\newtheorem{thm}{\bf{Theorem}}
\newtheorem{lem}[thm]{\bf {Lemma}}
\newtheorem{prop}[thm]{\bf {Proposition}}
\newenvironment{proof}{\paragraph*{Proof}}{\hfill$\square$ \newline}
\newcommand{\sgn}{\mathrm{sgn} }
\newcommand*{\QEDA}{\hfill\ensuremath{\square}}
\tikzstyle{int}=[draw, fill=blue!10, minimum height = 1cm, minimum width=1.5cm,thick ]
\tikzstyle{sint}=[draw, fill=blue!10, minimum height = 0.5cm, minimum width=0.8cm,thick ]
\tikzstyle{sum}=[circle, fill=blue!10, draw=black,line width=1pt,minimum size = 0.5cm, thick ]
\tikzstyle{ssum}=[circle, fill=blue!10,draw=black,line width=1pt,minimum size = 0.1cm]
\tikzstyle{int1}=[draw, fill=blue!10, minimum height = 0.5cm, minimum width=1cm,thick ]
\tikzstyle{enc}=[draw, fill=blue!10, minimum height = 2.7cm, minimum width=1cm,thick ]
\tikzstyle{int}=[draw, fill=blue!10, minimum height = 1cm, minimum width=1.5cm,thick ]
\title{\LARGE \bf Mean Estimation from Adaptive One-bit Measurements}
\author{
\IEEEauthorblockN{Alon Kipnis}
\IEEEauthorblockA{Department of Statistics \\
Stanford University\\
Stanford, CA\\}
\and
\IEEEauthorblockN{John C. Duchi}
\IEEEauthorblockA{Department of Electrical Engineering \\
and Department of Statistics \\
Stanford University\\
Stanford, CA\\}
}
\begin{document}
\graphicspath{{../Figs/}}
\maketitle
\thispagestyle{empty}
\pagestyle{empty}

\begin{abstract}
We consider the problem of estimating the mean of a normal distribution under the following constraint: the estimator can access only a single bit from each sample from this distribution. We study the squared error risk in this estimation as a function of the number of samples and one-bit measurements $n$. We consider an adaptive estimation setting where the single-bit sent at step $n$ is a function of both the new sample and the previous $n-1$ acquired bits. For this setting, we show that no estimator can attain asymptotic mean squared error smaller than $\pi/(2n)+O(n^{-2})$ times the variance. 
In other words, one-bit restriction increases the number of samples required for a prescribed accuracy of estimation by a factor of at least $\pi/2$ compared to the unrestricted case.
%
In addition, we provide an explicit estimator that attains this asymptotic error, showing that, rather surprisingly, only $\pi/2$ times more samples are required in order to attain estimation performance equivalent to the unrestricted case. 
\end{abstract}



\section{Introduction}
\label{sec:Intro}

The performance in estimating information from data collected and processed by multiple units may be limited due to communication constraints between these units. 
For example, consider large-scale sensor arrays where information is collected at multiple physical locations and transmitted to a central estimation unit. In this scenario, the ability to estimate a particular parameter from the data is dictated not only by the quality of observations and their number, but also by the available rate for communication between the sensors and the central estimator. The question that we ask is to what extent a parametric estimation task is affected by these communication constraints, and what are the fundamental performance limits in estimating a parameter subject to these restrictions. In this paper we answer this question in a particular setting: the estimation of the mean $\theta$ of a normal distribution with variance $\sigma^2$ under the constraint that only a single bit can be communicated on each sample $X_n$ from this distribution. As it turns out, the ability to share information among different samples before committing on each single-bit message dramatically affects the performance in estimating $\theta$. We therefore distinguish among three settings:
 \begin{itemize}
 \item[(i)] \emph{Centralized} encoding: all $n$ encoders confer and produce a single $n$ bit message which is a function of $X_1,\ldots,X_n$. 
 \item[(ii)] \emph{Adaptive} or \emph{sequential} encoding: the $n$th encoder observes $X_n$ and the $n-1$ previous single bit messages.
 \item[(iii)] \emph{Distributed} encoding: the output of the $n$th encoder is a single bit that is only a function of $X_n$.
 \end{itemize}
Clearly, as far as information sharing is concerned, settings (iii) is a more restrictive version of (ii) which is more restrictive than (i). We measure the estimation performance by the mean squared error (MSE) risk. We are interested in particular in the \emph{asymptotic relative efficiency} (ARE) of estimators in the constrained setting relative to the MSE attained by the empirical mean of the samples, which is the minimax estimator in estimating without one-bit constraint and its MSE decreases as $\sigma^2/n+O(n^{-2})$. \\

In setting (i), the estimator can evaluate the empirical mean of the samples and then communicate it using $n$ bits. This strategy leads to MSE behavior of $\sigma^2/n + O(2^{-n})$. Therefore, the ARE in this setting is $1$. Namely, asymptotically, there is no loss in performance due to the communication constraint under centralized encoding. In this work we show that a similar result does not hold even in setting (ii): the ARE of any adaptive estimation scheme is at least $\pi/2$. Namely, the single-bit per sample constraint incurs a minimal penalty of at least $1.57$ in the number of samples compared to an unconstrained estimator or to the optimal estimator in setting (i). In addition to this negative statement, we provide an estimator that attains this ARE. In other words, we show that the lower bound of $\pi/2$ on the ARE is tight, and that it is attained regardless of the particular realization of $\theta$ or the radius of the parameter space from which it is taken. Clearly, the minimal penalty on the efficiency of $\pi/2$ also holds under setting (iii), although the question whether this efficiency is achievable (or otherwise, what is the minimal ARE) remains open. 
\\

The lower bound of $\pi/2$ on the ARE, i.e., a lower bound of $\sigma^2\pi/(2n) +O(n^{-2})$ on the MSE, is obtained by showing that the Fisher information of any $n$ adaptive messages is not greater than $2n/(\pi \sigma^2)$. From here, the desired bound on the MSE follows from the van Trees version of the information inequality \cite{gill1995applications}. Finally, we show that an estimator that attains asymptotic MSE of $\sigma^2\pi/(2n)$ is obtained as a special case of \cite[Thm. 4]{polyak1992acceleration}. In addition to these two results, we also derive the one-step optimal strategy in which the message sent at step $i$th is designed to minimize the MSE given this message and the previous $i-1$ messages. Furthermore, we demonstrates numerically that the MSE under this strategy converge to $\pi \sigma^2/(2n)$.\\

We note that even though its ARE is $1$, the centralized encoding setting (i) already poses a non-trivial challenge for the design and analysis of an optimal encoding and estimation scheme. Indeed, the standard technique to encode an unknown random quantity using $n$ bits is equivalent to the design of a scalar quantizer \cite{gray1998quantization}. However, the optimal design of this quantizer depends on the distribution of its input, which is the goal of our estimation problem and hence its exact value is unknown. As a result, a non-trivial exploration exploitation tradeoff arises in this case. Note that the only missing parameter in our setting is the mean, which, under setting (i), is known to the encoder with uncertainty interval proportional to $\sigma/\sqrt{n}$. Therefore, while it is clear that uncertainty due to quantization decreases exponentially in the number of bits $n$ leading to ARE $1$, an exact expression for the MSE in this setting seems to be difficult to derive. The situation is even more involved in the adaptive encoding of setting (ii): an encoding and estimation strategy that is optimal for $n-1$ adaptive one-bit messages of a sample of size $n-1$, may not lead to a globally optimal strategy upon the recipient of the $n$th sample. Conversely, any one-step optimal strategy, in the sense that it finds the best one-bit message as a function of the current sample and the previous $2^{n-1}$ messages, is not guaranteed to be globally optimal. Our results imply that the ARE of any globally optimal strategy is $\pi/2$. 

\subsection*{Related Works}

As the variance $\sigma^2$ goes to zero, the task of finding $\theta$ using one-bit queries in the adaptive setting (ii) is easily solved by a bisection style method over the parameter space. Therefore, the general case of non-zero variance is a reminiscent of the noisy binary search problem with possibly infinite number of unreliable tests \cite{cicalese2002least, Karp:2007:NBS:1283383.1283478}. However, since we assume a continuous parameter space, a more closely related problem is that of one-bit analog-to-digital conversion of a noisy signal. For example, the sigma-delta modulator (SDM) analog-to-digital conversion \cite{1092194} uses one-bit threshold detector combined with a feedback loop to update an accumulated error state, and therefore falls under setting (ii). A SDM with a constant input $\theta$ corrupted by a Gaussian noise was studied in \cite{53738}, where it was shown that the output of the modulator converges to the true constant input almost surely. In other words, the SDM provides a consistent estimator for setting (ii). The rate of this convergence, however, was not analyzed and cannot be derived from the results of \cite{53738}. Our results imply that the rate of convergence of the MSE in SDM to a constant input is at most $\sigma^2\pi/2$ over the number of feedback iterations. \\

Our result of ARE of $\pi/2$ in the adaptive setting implies that even under coarse quantization constraints it is possible to achieve MSE in parametric estimation within only a relatively small penalty compared to the unconstrained estimator. A possible clue for this non-intuitive result is obtained from drawing the connection between our setting and the remote multiterminal source coding problem, also known as the CEO problem \cite{berger1996ceo, viswanathan1997quadratic, oohama1998rate, prabhakaran2004rate}. This connection, which is explained in details in Section~\ref{sec:ceo}, immediately leads to a lower bound of $4/3$ on the ARE in the distributed encoding of setting (iii). While this lower bound provides no new information compared to the lower bound of $\pi /2$ we derive later for setting (ii), it shows that the distributed nature of the problem is not a limiting factor in achieving MSE close to optimal even under one-bit quantization of each sample.\\

We also note that our settings (ii) and (iii) are special cases of 
\cite{zhang2013information} that consider adaptive and distributed estimation protocols for $m$ machines, each has access to $n/m$ independent samples. The main result of \cite{zhang2013information} are bounds on the estimation error as a function of the number of bits $R$ each machine uses for communication. The specialization of their result to our setting, by taking $m=n$ and $R=1$, leads to looser lower bounds then $\sigma^2\pi/(2n)$ for cases (ii) and (iii). The counterpart of our setting (iii) in the case of hypothesis testing was considered in \cite{52470}, although the results there cannot be extended to parametric estimation. Other related works include statistical inference under multiterminal data compression \cite{han1987hypothesis, zhang1988estimation}, and one-bit quantization constraints in compressed sensing  \cite{baraniuk2017exponential} and in MIMO detection in wireless communication \cite{singh2009limits}. \\

\subsection*{Paper Organization}

The rest of this paper is organized as follows: in Section~\ref{sec:problem} we define 
the main problem and notation. In Section~\ref{sec:ceo} we illustrate a connection between our parametric estimation problem and the remote multiterminal lossy compression problem. In Section~\ref{sec:sequential} we present our main results, deferring long proofs and technical results to the appendix. Concluding remarks are given in Section~\ref{sec:conclusions}. 

\section{Problem Formulation \label{sec:problem}}

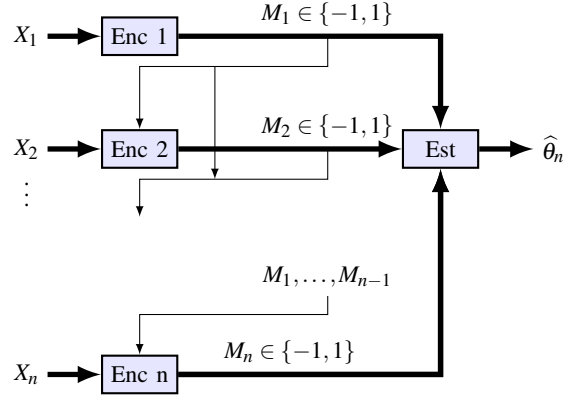
\begin{figure}
\begin{center}
\begin{tikzpicture}[node distance=2cm,auto,>=latex]
  \node at (0,0) (source) {$X_1$} ;
  \node[int1, right of = source, node distance = 1.5cm] (enc1) {Enc 1};  
\draw[->,line width = 2pt] (source) -- (enc1); 

 \node[below of = source, node distance = 1.5cm] (source2) {$X_2$};
\node[int1, right of = source2, node distance = 1.5cm] (enc2) {Enc 2};  
\draw[->,line width = 2pt] (source2) -- (enc2); 

\node[below of = source2, node distance = 3cm] (source3) {$X_n$};
\node[int1, right of = source3, node distance = 1.5cm] (enc3) {Enc n};  
\draw[->,line width = 2pt] (source3) -- (enc3);

\node[above of = source, node distance = 1cm] (dist) {$X_i \sim {\mathcal N} \left(\theta, \sigma^2 \right)$};
\node[below of = source2, node distance = 0.5cm] {$\vdots$};

\node[int1, right of = enc2, node distance = 4cm ] (est) {Est};
\draw[->,line width = 2pt] (enc1) -| node[above, xshift = -1.5cm] (mes1) { $M_1 \in \left\{-1,1\right\}$} (est);   
\draw[->,line width = 2pt] (enc2) -- node[above, xshift = 0.5cm] (mes2) { $M_2 \in \left\{-1,1\right\}$} (est);   
\draw[->] (mes1) -- +(0,-0.7) -| (enc2);

\draw[->] (mes2) -- +(0,-0.7) -| +(-2.5,-1.2);
\draw[->] (mes2)+(-1.5,0.8) -- +(-1.5,-0.7);
\draw[->,line width = 2pt] (enc3) -| (est);   

\node[below of = mes2, node distance = 2cm] (mes3) {$M_1,\ldots,M_{n-1} $};

\draw[->,line width = 2pt] (enc3) -| node[above, xshift = -2cm]  {$M_n \in \left\{-1,1\right\}$} (est);   

\draw[->] (mes3) -- +(0,-0.5) -| (enc3);

\node[right of = est, node distance = 1.5cm] (dest) {$\widehat{\theta}_n$};
\draw[->, line width=2pt] (est) -- (dest);
\end{tikzpicture}
\end{center}
\caption{\label{fig:sequential} Adaptive one-bit encoding: the $i$th encoder delivers a single bit message that is a function of its private sample $X_i$ and the previous $i-1$ messages $M_1,\ldots,M_{i-1}$.}
\end{figure}

Let $X_i$, $i=1,\ldots,n$, be $n$ independent samples from the normal distribution with mean $\theta$ and variance $\sigma^2$. 
We assume that the mean $\theta$ is drawn once from a prior distribution $\pi(\theta)$ on $\Theta$, which is a closed subset of the real line. We moreover assume that $\pi(\theta)$ is absolutely continuous with respect to the Lebesgue measure with density $\pi(d\theta)$. The problem we consider is the estimation of the parameter $\theta$ under the following constraints on the communication between the samples $X^n = (X_1,\ldots,X_n)$ and a centralized estimator: 
\begin{itemize}
\item[(i)] The estimator at time $n$ is only a function of the $n$ messages $M^n = \left(M_1,\ldots,M_n \right)$.
\item[(ii)] For each $i=1,\ldots,n$, the $i$th message $M_i$ is a function of the sample $X_i$ and the $i-1$ previous messages $M^{i-1}$.
\item[(iii)] The $i$th message $M_i$ takes only two possible values, say $1$ and $-1$. 
\end{itemize}
In other words, the $i$th message is defined by a function from the real line to $\{-1,1\}$ that is measurable with respect to the sigma algebra generated by $M^{i-1}$ and $X_i$, and the $n$ messages $M^n$ are the only available to the estimator. Upon observing $M^n$, the estimator produces an estimate $\widehat{\theta}_n(M^n)$ of $\theta$. A system describing the above scheme is illustrated in Fig.~\ref{fig:sequential}. \\

In this work we are concerned with the Bayes MSE risk defined as
\begin{equation}
\label{eq:error_def}
\mathbb E\left(\widehat{\theta}_n - \theta \right)^2,
\end{equation}
where the expectation is taken with respect to the distribution of $X^n$ and the prior distribution $\pi(\theta)$. \\

The main problem we consider is the minimization of \eqref{eq:error_def} over all encoding and estimation strategies and the characterization of its minimal value as a function of $n$. This minimization is the combination of the following two procedures: (1) selecting the $i$th message $M_i$ based on past messages and current observation $X_i$, and (2) estimating $\theta$ given messages $M^n$. 
We are interested in particular on the increase in sample complexity compared to the vanilla mean estimation without one-bit constraint. For this reason, we consider 
\begin{equation}
{\sigma^2} n \mathbb E\left(\widehat{\theta}_n - \theta \right)^2
\label{eq:relative_efficiency}
\end{equation}
in the limit as $n$ goes to infinity. Equation \eqref{eq:relative_efficiency} is the ratio between the MSE attained by the empirical mean of the samples and the MSE attained by the estimator $\widehat{\theta}_n$. Note that if the limit $\mathbb E\left(\widehat{\theta}_{n} - \theta \right)^2$ exists and finite, than \eqref{eq:relative_efficiency} is the ARE of $\widehat{\theta}_n$ \cite[Def. 6.6]{lehmann2006theory}. 


In addition to the notations defined above, we denote by $\phi(x)$ the standard normal density and by $\Phi(x)$ the standard normal cumulative distribution function. \\

Before deriving our main results, we comment on the relation between our setting and the remote multiterminal source coding problem, also known as the CEO problem.

\section{Relation to Remote Multiterminal Source Coding \label{sec:ceo}}
The setting of the CEO includes $n$ encoders, each has access to a noisy version of a random source sequence \cite{berger1996ceo}. 
The $i$th encoder observes $k$ noisy source symbols and transmit $R_i k$ bits to a central estimator. \par
Assuming that $\theta$ is drawn once from the prior $\pi(\theta)$, our mean estimation problem from one-bit samples under distributed encoding (setting (iii) in the Introduction) corresponds to the Gaussian CEO setting with $k=1$ source realization: the $i$th encoder uses $R_i=1$ bits to transmit a message that is a function of $X_i = \theta + \sigma Z_i$, where $Z_i$ is standard normal. As a result, a lower bound on the MSE distortion in estimating $\theta$ in the distributed encoding setting is given by the MSE in the optimal source coding scheme for the CEO with: $n$ terminals of codes rates $R_1 = \ldots = R_n = 1$, a Gaussian observation noise at each terminal of variance $\sigma^2$, and an arbitrary number of $k$ independent draws of $\theta$. Note that the difference between the CEO and ours lays in the privilege of each of the CEO encoders to describe $k$ realizations of $\theta$ using $k$ bits with MSE averaged over these realization, whereas our setting only allows $k=1$. 
 \\

By using an expression for the minimal MSE in the Gaussian CEO as the number of terminals goes to infinity, we conclude the following:
\begin{prop} \label{prop:ceo_lower_bound}
Assume that $\Theta = \mathbb R$ and that $\pi(\theta) = \mathcal N(0,\sigma_\theta^2)$. Then any estimator $\widehat{\theta}_n$ of $\theta$ in the distributed setting satisfies
\begin{equation} \label{eq:ceo_bound}
 n\mathbb E \left( \theta - \theta_n \right)^2 \geq \frac{4\sigma^2}{3} + O(n^{-1}),
\end{equation}
where the expectation is with respect to $\theta$ and $X^n$.
\end{prop}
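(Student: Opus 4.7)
The plan is to leverage the correspondence with the Gaussian CEO problem described in Section~\ref{sec:ceo}: a distributed one-bit encoding of the $n$ samples $X_i = \theta + \sigma Z_i$ is a one-shot ($k=1$) instance of the symmetric quadratic Gaussian CEO problem with $n$ terminals, each allotted a per-symbol rate of $R=1$ bit. Because the CEO setting is strictly more permissive --- it additionally allows block coding over $k\to\infty$ independent draws of $\theta$, requiring only an \emph{average} of one bit per realization at each terminal --- any one-shot estimator corresponds to a valid (memoryless) CEO coding scheme of the same rate. Hence the CEO minimum distortion is a lower bound on $\mathbb{E}[(\theta-\widehat{\theta}_n)^2]$.

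The second step is to invoke the closed-form rate-distortion characterization for the symmetric quadratic Gaussian CEO problem established by Oohama \cite{oohama1998rate} and by Prabhakaran et al.\ \cite{prabhakaran2004rate}. For $n$ terminals at symmetric per-terminal rate $R$, Gaussian prior $\theta \sim \mathcal{N}(0,\sigma_\theta^2)$, and observation noise variance $\sigma^2$, the minimum distortion admits the closed form
\begin{equation*}
\bigl(D^*_{\mathrm{CEO}}(n,R)\bigr)^{-1} \;=\; \frac{1}{\sigma_\theta^2} \,+\, \frac{n\bigl(1-2^{-2R}\bigr)}{\sigma^2},
\end{equation*}
obtained by optimizing a Gaussian Berger--Tung test channel $U_i = X_i + W_i$ against the per-encoder rate constraint and matching the converse. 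Specializing to $R=1$ gives $1-2^{-2R}=3/4$, whence
\begin{equation*}
D^*_{\mathrm{CEO}}(n,1) \;=\; \frac{4\sigma^2\sigma_\theta^2}{4\sigma^2 + 3n\sigma_\theta^2}.
\end{equation*}

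The final step is a routine expansion: multiplying by $n$ and expanding in $1/n$,
\begin{equation*}
n D^*_{\mathrm{CEO}}(n,1) \;=\; \frac{4\sigma^2}{3}\cdot\frac{1}{1+\tfrac{4\sigma^2}{3n\sigma_\theta^2}} \;=\; \frac{4\sigma^2}{3} + O(n^{-1}),
\end{equation*}
which together with the CEO lower bound yields \eqref{eq:ceo_bound}.

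The main obstacle is the first step: our one-bit encoders produce a single binary symbol ($k=1$), while CEO encoders operate on length-$k$ blocks with $k\to\infty$. The bridge is that any measurable map $M_i\colon\mathbb{R}\to\{\pm 1\}$ applied independently to each of $k$ i.i.d.\ samples is a legitimate rate-$1$ memoryless CEO encoder for every $k$; feeding these into the optimal CEO decoder produces an average MSE over the $k$ source realizations that is no smaller than $D^*_{\mathrm{CEO}}$, and by symmetry this average equals the one-shot MSE of the same encoders paired with the Bayes estimator. This conversion, combined with the operational tightness (achievability and converse) of Oohama's formula, makes the inequality rigorous.
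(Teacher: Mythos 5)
Your proposal is correct and follows essentially the same route as the paper: reduce the one-shot distributed one-bit problem to the quadratic Gaussian CEO problem and invoke its known rate--distortion characterization, then expand in $1/n$; the paper uses the sum-rate equation of Chen et al.\ with $R_\Sigma=n$, $L=n$ and solves it asymptotically, which yields exactly your expression $\frac{4\sigma^2\sigma_\theta^2}{4\sigma^2+3n\sigma_\theta^2}+o(n^{-1})$. One small caveat: the formula $\left(D^*\right)^{-1}=\frac{1}{\sigma_\theta^2}+\frac{n\left(1-2^{-2R}\right)}{\sigma^2}$ is not the exact symmetric-rate optimum (the sum-rate function carries an extra $\frac{1}{2}\log\frac{\sigma_\theta^2}{D}$ term, so the true minimum distortion is slightly larger), but since that correction only increases the distortion your expression remains a valid lower bound and the conclusion is unaffected; your explicit bridging of the one-shot ($k=1$) setting to block CEO coding via memoryless encoders is actually more careful than what the paper writes down.
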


\begin{proof}
We consider the expression \cite[Eq. 10]{chen2004upper} that gives the minimal distortion $D^\star$ in the CEO with $L$ observers and under a total sum-rate $R_\Sigma = R_1 + \ldots +R_L$:
\begin{equation} \label{eq:ceo_optimal_sumrate}
R_{\Sigma} = \frac{1}{2} \log^+ \left[ \frac{\sigma_\theta^2}{D^\star} \left( \frac{D^\star L}{ D^\star L - \sigma^2 + D^\star \sigma^2 / \sigma_\theta^2 }\right)^L  \right].
\end{equation}
Assuming $R_\Sigma = n$ and $L=n$, we get
\begin{equation} \label{eq:ceo_optimal_sumrate}
n = \frac{1}{2} \log_2 \left[ \frac{\sigma_\theta^2}{D^\star} \left(\frac{ D^\star n }{D^\star n - \sigma^2 + D^\star \sigma^2/\sigma_\theta^2 }  \right)^n  \right].
\end{equation}
The value of $D^\star$ that satisfies the equation above describes the MSE under an optimal allocation of the sum-rate $R_\Sigma = n$ among the $n$ encoders. Therefore, $D^\star$ provides a lower bound to the CEO distortion with $R_1=\ldots,R_n = 1$ and hence a lower bound to the minimal MSE in estimating $\theta$ in the distributed encoding setting. By considering $D^\star$ in \eqref{eq:ceo_optimal_sumrate} as $n\rightarrow \infty$, we see that 
\[
D^\star = \frac{ 4\sigma^2 }{3n + 4 \sigma^2 / \sigma_\theta^2 } + o(n^{-1}) =  \frac{4\sigma^2}{3n} + o(n^{-1}). 
\]
\end{proof}

Prop.~\ref{prop:ceo_lower_bound} implies that, unlike in the centralized setting, there is a loss in efficiency in estimating $\theta$ due to one-bit measurements in this setting. In the next section we show that the ARE in adaptive encoding setting does not exceeds $\pi/2$, and thus provides a tighter lower bound for the distributed encoding setting than $4/3$ of  \eqref{eq:ceo_bound}. \\

We note although the lower bound \eqref{eq:ceo_bound} was derived assuming the optimal allocation of $n$ bits per observation among the encoders, this bound cannot be tightened by considering the CEO distortion while enforcing the condition $R_1=\ldots = R_n = 1$. Indeed, an upper bound for the CEO distortion under the condition $R_1=\ldots = R_n = 1$ follows from \cite{KipnisRini2017}, and leads to
\[
D_{CEO} \leq  \left( \frac{1}{\sigma_\theta^2} +  \frac{3n}{4\sigma^2 + \sigma_\theta^2} \right)^{-1}   =
\frac{4 \sigma^2}{3n} +  \frac{\sigma_\theta^2}{3n} + O(n^{-2}),
\]
which is equivalent to \eqref{eq:ceo_bound} when $\sigma_\theta$ goes to zero. 

\section{Results \label{sec:sequential}}
The first main result of this paper, as described in Thm.~\ref{thm:adpative_lower_bound} below, states that the ARE of any adaptive estimator cannot be lower than $\pi/2$. Next, we provide a particular adaptive estimation scheme and show in Thm.~\ref{thm:sgd} that its efficiency is $\pi/2$. Finally, in Thm.~\ref{thm:opt_one_step}, we provide an adaptive estimation scheme that is one-step optimal in the sense that at each step $i$, the encoder send the message $M_i^\star$ that minimizes the MSE given $X_i$ and the previous $M^{i-1}$ messages. While it is not clear whether the efficiency of this last scheme is $\pi/2$, numerical simulations suggests that the MSE of this scheme times $n$ also converges to $\pi/2$ faster than the first scheme.

%
%
%
%
%


\subsection{A lower bound on adaptive one-bit schemes}
Our first results asserts that the ARE \eqref{eq:relative_efficiency} of any adaptive estimation scheme is bounded from below by $\pi/2$, as follows from the following theorem:
\begin{thm}[minimal relative effeciency] \label{thm:adpative_lower_bound}
Let $\widehat{\theta}_n$ be any estimator of $\theta$ in the adaptive setting of Fig.~\ref{fig:sequential}. Assume that the density of the prior $\pi(\theta)$ converges to zero at the endpoints of the interval $\Theta$. Then
\[
\mathbb E\left[ (\theta-\theta_n)^2 \right] \geq  \frac{\pi \sigma^2 }{2n +\pi \sigma^2  I_0}  =  \frac{\pi}{2n}\sigma^2+O(n^{-2}),
\]
where 
\[
I_0 = \mathbb E \left( \frac{d}{d\theta} \log \pi (\theta) \right)^2
\]
is the Fisher information with respect to a location model in $\theta$. 
\end{thm}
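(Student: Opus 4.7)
The plan is to combine the van Trees (Bayesian Cram\'er--Rao) inequality with a per-message upper bound on the Fisher information carried by each one-bit message about $\theta$. Under the stated boundary condition on the prior, the van Trees inequality \cite{gill1995applications} gives
\[
\mathbb{E}\left[(\widehat{\theta}_n - \theta)^2\right] \geq \frac{1}{\bar{I}(M^n;\theta) + I_0},
\]
where $\bar{I}(M^n;\theta) = \int I_{M^n}(\theta)\, \pi(\theta)\, d\theta$ is the prior-averaged Fisher information that $M^n$ carries about $\theta$. So it is enough to prove the pointwise bound $I_{M^n}(\theta) \leq 2n/(\pi\sigma^2)$; the theorem then follows by substitution.

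To get the bound on the cumulative information, I would first apply the chain rule for Fisher information along the adaptive protocol. Conditional on $M^{i-1}=m^{i-1}$, the $i$th message $M_i$ is a measurable $\{-1,1\}$-valued function of $X_i\sim\mathcal{N}(\theta,\sigma^2)$ alone, so
\[
I_{M^n}(\theta) = \sum_{i=1}^n \mathbb{E}_{M^{i-1}}\!\left[ I_{M_i\,|\,M^{i-1}}(\theta)\right],
\]
and the problem reduces to a single-bit bound: for any measurable $A\subseteq\mathbb{R}$, the random variable $M=\mathbf{1}_A(X)$ with $X\sim\mathcal{N}(\theta,\sigma^2)$ satisfies $I_M(\theta)\leq 2/(\pi\sigma^2)$.

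Writing $p(\theta)=\int_A \phi_\sigma(x-\theta)\,dx$, the single-bit Fisher information equals $(p'(\theta))^2/(p(\theta)(1-p(\theta)))$. The key step is a rearrangement argument: after substituting $u=(x-\theta)/\sigma$, we have $p'(\theta)=\sigma^{-1}\int_B u\phi(u)\,du$ and $p(\theta)=\int_B \phi(u)\,du$, with $B=(A-\theta)/\sigma$. Since $u\mapsto u$ is monotone on $\mathbb{R}$, for any fixed value $p_0=\int_B \phi(u)\,du$ the absolute value $|\int_B u\phi(u)\,du|$ is maximized when $B$ is a half-line $(c,\infty)$ (with $\Phi(-c)=p_0$), giving $|p'(\theta)| \leq \phi(c)/\sigma$. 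Thus
\[
I_M(\theta) \leq \frac{1}{\sigma^2}\cdot \frac{\phi^2(c)}{\Phi(c)\Phi(-c)},
\]
and a one-variable optimization in $c$ shows the right-hand side is maximized at $c=0$, where it equals $2/(\pi\sigma^2)$. Combining with the chain-rule expression yields $I_{M^n}(\theta)\leq 2n/(\pi\sigma^2)$ for every $\theta$, and plugging into van Trees gives exactly $\pi\sigma^2/(2n+\pi\sigma^2 I_0)$; Taylor expansion in $1/n$ produces the $\pi\sigma^2/(2n)+O(n^{-2})$ form.

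The main obstacle is the rearrangement step that identifies the half-line as the extremal set, and then checking that the scalar optimization of $\phi^2(c)/(\Phi(c)\Phi(-c))$ is indeed maximized at $c=0$ (this is a symmetry-plus-second-derivative check, and intuitively corresponds to the well-known fact that the threshold test is most informative when placed at the mean). Everything else---the chain rule, centering of $X_i$ before the substitution, the van Trees step, and the asymptotic expansion---is routine once this extremal single-bit bound is in place.
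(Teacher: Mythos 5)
Your proposal is correct, and it shares the paper's overall architecture: chain rule for Fisher information along the adaptive protocol, a per-message bound $I_{M_i\mid M^{i-1}}(\theta)\leq 2/(\pi\sigma^2)$, and the van Trees inequality to convert the averaged Fisher information bound into the Bayes risk bound $\pi\sigma^2/(2n+\pi\sigma^2 I_0)$. Where you genuinely diverge is in how you prove the single-bit lemma. The paper first approximates the acceptance region $A$ by a finite union of disjoint intervals (via regularity of Lebesgue measure), rewrites the Fisher information as a ratio involving alternating sums of $\phi$ and $\Phi$ at the interval endpoints, and then runs an induction on the number of endpoints, locating interior critical points and pushing boundary cases back to the inductive hypothesis. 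You instead fix $p_0=\int_B\phi(u)\,du$ and maximize $\bigl|\int_B u\phi(u)\,du\bigr|$ over all measurable $B$ of that Gaussian measure; the Neyman--Pearson/bathtub argument identifies the half-line as extremal, which collapses the problem to the one-variable maximization of the probit weight function $\phi^2(c)/\bigl(\Phi(c)\Phi(-c)\bigr)$ at $c=0$. Your route is cleaner: it dispenses with the interval approximation and the induction entirely, handles arbitrary measurable $A$ directly, and isolates the real reason the threshold test at the mean is extremal. The only step you should spell out if you write this up is the rearrangement claim itself (it is exactly the Neyman--Pearson lemma applied to the likelihood ratio $u$, together with the identity $\int_c^\infty u\phi(u)\,du=\phi(c)$), and the monotonicity of the probit weight in $|c|$, which the paper handles by citation.
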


\subsubsection*{Sketch of Proof}
The main idea in the proof is to bound from above the Fisher information of any set of $n$ single-bit messages with respect to $\theta$. Once this bound is achieved, the result follows by using the van-Trees inequality \cite[Thm. 2.13]{tsybakov2008introduction},\cite{gill1995applications} which bounds from below the MSE of any estimator of $\theta$ by the inverse of the expected value of the aforementioned Fisher information plus $I_0$. The details are in the Appendix.\\

Next, we present an adaptive estimation scheme that attains ARE of $\pi/2$. 
\subsection{Asymptotically optimal estimator}
Let $\left\{\gamma_n \right\}_{n=1}^\infty$ be a strictly positive sequence satisfying:
\begin{equation} \label{eq:conditions}
\begin{cases}
\frac{\gamma_n - \gamma_{n+1}}{\gamma_n} = o(\gamma_n), &  \\
\sum_{n=1}^\infty \frac{\gamma_n^{(1+\lambda)/2}} {\sqrt{n}} < \infty, & 
\mathrm{for~some~}0< \lambda \leq 1
\end{cases}
\end{equation}
(e.g., $\gamma_n = n^{-\beta}$ for $\beta \in (0,1)$). Consider the following estimator $\widehat{\theta}_n$ for $\theta$:  
\begin{equation}
\label{eq:sgd_alg}
\theta_n = \theta_{n-1} +  \gamma_n \sgn (X_n - \theta_{n-1}), \quad n = 1,2,\ldots,
\end{equation}
and set the $n$th step estimation as
\begin{equation} \label{eq:sgd_est}
\widehat{\theta}_n =  \frac{1}{n} \sum_{i=1}^n  \theta_i. 
\end{equation}

For the estimator defined by \eqref{eq:sgd_alg} and \eqref{eq:sgd_est} we have the following results:
\begin{thm} \label{thm:sgd}
The sequence $\widehat{\theta}_n$ of \eqref{eq:sgd_est} satisfies
\begin{enumerate}
\item[(i)]
\[
\sqrt{n} \left( \widehat{\theta}_n - \theta \right) \overset{d}{\rightarrow} \mathcal N \left(0,  \pi \sigma^2 /2 \right).
\]
\item[(ii)] In addition to the conditions above, assume that $\gamma_n = o(n^{-2/3})$ and $\sum_{n=1}^\infty \gamma_n = \infty$ (e.g., $\gamma_n = n^{-\beta}$ with $2/3<\beta<1$). Then
\[
\lim_{n\rightarrow \infty} n\mathbb E \left[ \left(\theta-\widehat{\theta}_n \right)^2 \right] = \frac{\pi}{2} \sigma^2 . 
\]
\end{enumerate}

\end{thm}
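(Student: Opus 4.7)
The plan is to recognize \eqref{eq:sgd_alg}--\eqref{eq:sgd_est} as a Polyak--Ruppert averaged Robbins--Monro stochastic approximation scheme and to appeal to the averaged-SGD central limit theorem of \cite[Thm.~4]{polyak1992acceleration}. First I would rewrite \eqref{eq:sgd_alg} in the canonical stochastic-approximation form
\[
\theta_n = \theta_{n-1} - \gamma_n\bigl(g(\theta_{n-1}) + \xi_n\bigr),
\]
where
\[
g(t) := 2\Phi\!\left(\frac{t-\theta}{\sigma}\right) - 1, \qquad \xi_n := -\sgn(X_n - \theta_{n-1}) - g(\theta_{n-1}).
\]
The function $g$ is real analytic, strictly increasing, and has its unique zero at $t=\theta$, with slope
\[
H := g'(\theta) = \tfrac{2}{\sigma}\phi(0) = \sqrt{2/(\pi\sigma^2)},
\]
while $\xi_n$ is a martingale difference sequence bounded by $2$ whose conditional variance at the target is $\mathrm{Var}(\xi_n \mid \theta_{n-1}=\theta) = \mathrm{Var}(\sgn(X_n-\theta)) =: S = 1$.

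Second, I would verify the hypotheses of \cite[Thm.~4]{polyak1992acceleration}. The step-size assumption \eqref{eq:conditions} is precisely their standing hypothesis on $\{\gamma_n\}$. Global monotonicity of $g$ together with the uniform boundedness of $\sgn$ yields almost-sure consistency $\theta_n \to \theta$ by a standard Robbins--Siegmund supermartingale argument; analyticity of $g$ supplies the local linearization $g(t) = H(t-\theta)+O((t-\theta)^2)$ near the root; and boundedness of the innovation handles every moment condition automatically. Plugging $H$ and $S$ into the averaged-SGD CLT then yields
\[
\sqrt{n}\,(\widehat\theta_n - \theta) \overset{d}{\to} \mathcal N\!\bigl(0,\,H^{-1} S H^{-1}\bigr) = \mathcal N\!\bigl(0,\,\tfrac{\pi\sigma^2}{2}\bigr),
\]
which is part~(i).

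For part~(ii), the additional task is to upgrade convergence in distribution to convergence of the second moment, i.e., to establish uniform integrability of $n(\widehat\theta_n - \theta)^2$. This is the role of the strengthened condition $\gamma_n = o(n^{-2/3})$: a quadratic Lyapunov analysis in the spirit of \cite{polyak1992acceleration} under this decay rate gives $\mathbb E[(\theta_n - \theta)^2] = O(\gamma_n)$, and a Cauchy--Schwarz bound on $\widehat\theta_n = n^{-1}\sum_{i=1}^n\theta_i$ then produces $\sup_n n\,\mathbb E[(\widehat\theta_n - \theta)^2] < \infty$. Combined with part~(i), the family $\{\,n(\widehat\theta_n - \theta)^2\,\}_{n\ge 1}$ is uniformly integrable, so the limit of expectations coincides with the limit in distribution and $\lim_n n\,\mathbb E[(\widehat\theta_n - \theta)^2] = \pi\sigma^2/2$.

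I expect the main obstacle to be this verification step. Although the recursion is paradigmatic for \cite{polyak1992acceleration}, their assumptions are typically phrased for a regression function with Lipschitz gradient and innovations with a finite $(2+\epsilon)$ moment, so some care is needed to reconcile them with the discontinuous innovation $\sgn(X_n - \theta_{n-1})$. The Gaussianity of $X_n$ smooths everything at the level of conditional expectations---$g$ inherits analyticity from $\Phi$, and the noise is uniformly bounded by $2$---so this reduces to bookkeeping, but it is the one place where the appeal to Polyak--Juditsky must be carefully checked rather than quoted verbatim.
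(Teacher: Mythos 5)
Part (i) of your proposal is exactly the paper's route: the paper also casts \eqref{eq:sgd_alg} as an averaged Robbins--Monro recursion and invokes \cite[Thm.~4]{polyak1992acceleration} with $\varphi=\sgn$, computing $\chi(0)=1$ and $\psi'(0)=2\phi(0)/\sigma=\sqrt{2/(\pi\sigma^2)}$ to get the limit variance $\chi(0)/\psi'(0)^2=\pi\sigma^2/2$; your $H$ and $S$ are the same quantities, and the verification of the hypotheses is indeed the bookkeeping you describe. That part is fine.

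Part (ii) has a genuine gap. Your plan is to prove $\mathbb E[(\theta_i-\theta)^2]=O(\gamma_i)$ and then bound the average by Cauchy--Schwarz, but that step cannot deliver $\sup_n n\,\mathbb E[(\widehat\theta_n-\theta)^2]<\infty$: with $\gamma_i=i^{-\beta}$, the Minkowski bound gives $\|\widehat\theta_n-\theta\|_2\le n^{-1}\sum_{i\le n}O(\gamma_i^{1/2})=O(n^{-\beta/2})$ and the Jensen/Cauchy--Schwarz bound gives $\mathbb E[(\widehat\theta_n-\theta)^2]\le n^{-1}\sum_{i\le n}O(\gamma_i)=O(n^{-\beta})$; either way $n\,\mathbb E[(\widehat\theta_n-\theta)^2]=O(n^{1-\beta})\to\infty$ for $\beta<1$. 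The whole point of Polyak--Ruppert averaging is that the average attains the $n^{-1/2}$ rate \emph{despite} the individual iterates only attaining $\gamma_n^{1/2}$, and this gain comes from the orthogonality of the martingale increments in the decomposition $\widehat\theta_n-\theta=\frac{1}{n}\sum_i H^{-1}\xi_i+(\text{remainders})$, which a termwise triangle-inequality bound destroys. Moreover, even if you had $\sup_n n\,\mathbb E[(\widehat\theta_n-\theta)^2]<\infty$, $L^1$-boundedness of $n(\widehat\theta_n-\theta)^2$ plus convergence in distribution does not imply uniform integrability (take $Y_n=n\mathbf 1_{A_n}$ with $\mathbb P(A_n)=1/n$); you would need a $(2+\epsilon)$-moment bound on $\sqrt n(\widehat\theta_n-\theta)$ or an explicit second-moment expansion. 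The paper sidesteps all of this by citing \cite[Thm.~2]{polyak1990new}, which directly asserts $\mathbb E[(\widehat\theta_n-\theta)^2]=n^{-1}\chi(0)/\psi'(0)^2+o(n^{-1})$ under precisely the extra hypotheses $\gamma_n=o(n^{-2/3})$ and $\sum_n\gamma_n=\infty$ (these control the bias and remainder terms in the martingale decomposition, not uniform integrability). To repair your argument you should either quote that $L^2$ result as the paper does, or carry out the decomposition and bound the remainders in $L^2$ explicitly.
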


\subsubsection*{Proof}
The asymptotic behavior of \eqref{eq:sgd_est} is a special case of \cite[Thm. 4]{polyak1992acceleration} and \cite[Thm. 2]{polyak1990new}. The details are in the Appendix.\\

Thm.~\ref{thm:sgd} implies that the estimator $\widehat{\theta}_n$, defined by \eqref{eq:sgd_est} and \eqref{eq:sgd_alg}, attains the minimal ARE as established by Thm.~\ref{thm:adpative_lower_bound}.
\par
Note that $\theta_0$ is not explicitly defined 
in equation \eqref{eq:sgd_est}. While a reasonable initialization is $\theta_0 = \mathbb E [\theta]$, Thm.~\ref{thm:sgd} implies that the asymptotic behavior of the estimator is indifferent to this initialization. Thus, the optimal efficiency is attained regardless of the prior distribution on $\theta$ or the radius of the parameter space $\Theta$. Nevertheless, the bound in Thm.~\ref{thm:adpative_lower_bound} suggests that the non-asymptotic estimation error can be significantly reduced whenever the location information $I_0$ is large. In contrast, the one-step optimal scheme presented in the following subsection updates the prior distribution on $\theta$ given all information gathered until step $n-1$ to provide the step $n$ estimate and prior. In particular, the this scheme exploit the prior information on $\theta$ provided by $\pi(\theta)$. 

\subsection{One-step optimal estimation}
We now consider an estimation scheme that posses the property of \emph{one-step optimality}: at each step $i$, the $i$th encoder designs the detection region $M_i^{-1}(1)$ such that the MSE given $M^i$ is minimal. In other word, this scheme designs the messages $M^n$ in a greedy manner, such that the MSE at step $i$ is minimal given the current state of the estimation described by $M^{i-1}$. \\

The following theorem determine the structure of the message that minimizes the next step MSE:
\begin{thm}[optimal one-step estimation] \label{thm:opt_one_step}
Let $\pi(\theta)$ be an absolutely continuous log-concave probability distribution. Given a sample $X$ from the distribution $\mathcal N(\theta, \sigma^2)$, define 
\begin{equation}
\label{eq:adaptive_main_message}
M = \sgn(X - \tau),
\end{equation}
where $\tau$ satisfies the equation
\begin{equation}
 \label{eq:fixed_point}
 \tau = \frac{m^-(\tau) + m^+(\tau)}{2},
\end{equation}
with
\begin{align*}
m^-(\tau)  & = \frac{\int_{-\infty}^{\tau} \theta \pi(d\theta) }{\int_{-\infty}^{\tau} \pi(d\theta)} ,\\
m^+(\tau) & = \frac{\int_{\tau}^\infty \theta \pi(d\theta) }{\int_{\tau}^\infty \pi(d\theta)} .
\end{align*}
Then for any estimator $\widehat{\theta}$ which is a function of $M'(X) \in \{-1,1\}$, we have
\begin{equation}
\label{eq:opt_cond}
\mathbb E \left(\theta-\widehat{\theta}(M')\right)^2 \geq  \mathbb E \left(\theta- \mathbb E[\theta|M]\right)^2,
\end{equation}
\end{thm}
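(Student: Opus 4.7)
The plan is to reduce the joint optimization over $(M',\widehat\theta)$ to a one-dimensional problem in a threshold parameter and then identify the optimal threshold via the stated fixed-point equation. The three ingredients I would use are Bayes decoding, a first-variation argument on the detection set, and log-concavity of $\pi$ to force monotonicity of the posterior mean.

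First, for any measurable $M'$ the Bayes-optimal decoder is $\widehat\theta^\star(M') = \mathbb E[\theta \mid M'(X)]$, and the resulting MSE equals $\mathrm{Var}(\theta) - \mathrm{Var}(\mathbb E[\theta \mid M'(X)])$. The problem therefore reduces to maximizing $\mathrm{Var}(\mathbb E[\theta \mid M']) = p_+ p_- (\mu_+ - \mu_-)^2$ over the detection set $A = \{x : M'(x) = 1\}$, where $p_\pm = \mathbb P(M' = \pm 1)$ and $\mu_\pm = \mathbb E[\theta \mid M' = \pm 1]$. A first-variation calculation---moving an infinitesimal mass of $X$ near a point $x$ into or out of $A$---shows that the change in the objective is proportional to $(\mu_+ - \mu_-)\bigl(2h(x) - \mu_+ - \mu_-\bigr)$, where $h(x) = \mathbb E[\theta \mid X = x]$ is the single-observation posterior mean. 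Hence the stationary $A$ takes the form $\{x : h(x) \geq c^\star\}$ with $c^\star = (\mu_+ + \mu_-)/2$. Log-concavity of $\pi$ combined with the log-concave Gaussian likelihood makes the joint density of $(\theta,X)$ log-concave; the monotone-likelihood-ratio property of the Gaussian location family then forces $h$ to be non-decreasing in $x$, so the optimal $A$ is a half-line $\{X \geq \tau\}$ and $M = \sgn(X - \tau)$.

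It remains to characterize $\tau$. Differentiating the objective in $\tau$ yields a stationarity equation of the form $h(\tau) = (\mu_+(\tau) + \mu_-(\tau))/2$, and I expect the main obstacle to be showing that this equation can be recast as the prior-only fixed point $\tau = (m^+(\tau) + m^-(\tau))/2$. To attempt this reduction I would use the Tweedie-type identity $h(x) = x + \sigma^2 (\log f_X)'(x)$ together with integration by parts, which lets one trade integrals of $f_X$ weighted by $\Phi((\tau - \theta)/\sigma)$ for integrals of $\pi$ weighted by $\mathbf{1}\{\theta \gtrless \tau\}$, with boundary terms absorbing the $\sigma^2 f_X(\tau)$ contributions. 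Existence of a solution to the resulting fixed-point equation would then follow from continuity and the Intermediate Value Theorem applied to $F(\tau) = \tau - (m^+(\tau) + m^-(\tau))/2$ on $\Theta$, while uniqueness and global optimality follow from the quasi-concavity of the one-parameter threshold objective under log-concavity of $\pi$.
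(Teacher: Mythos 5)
Your opening steps are sound and in fact attack the literal statement more directly than the paper does: the Bayes-decoder reduction, the rewriting of the objective as maximizing $\mathrm{Var}(\mathbb E[\theta\mid M'])$, the first-variation argument showing that the optimal detection region is a superlevel set of the posterior mean $h(x)=\mathbb E[\theta\mid X=x]$, and the monotonicity of $h$ (which needs only the monotone likelihood ratio of the Gaussian location family, not log-concavity of $\pi$) are all correct and yield that the one-step optimal message is a threshold test on $X$. The gap is exactly where you flag it: the final reduction cannot be carried out. Your stationarity condition reads $h(\tau)=\tfrac12\left(\mathbb E[\theta\mid X>\tau]+\mathbb E[\theta\mid X<\tau]\right)$; it involves conditional means of $\theta$ given events in the $X$-space and therefore depends on $\sigma$. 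The fixed point in the theorem, $\tau=\tfrac12\left(m^+(\tau)+m^-(\tau)\right)$, involves conditional means of $\theta$ given $\theta\gtrless\tau$ under the prior alone and is independent of $\sigma$. For an asymmetric log-concave prior (e.g., an exponential) these two equations have different solutions for generic $\sigma$, so no Tweedie-formula/integration-by-parts manipulation can identify them; the $\sigma^2 f_X(\tau)$ boundary terms you hope will be ``absorbed'' are precisely the $\sigma$-dependence that separates the two conditions.

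The paper's own proof takes a different route that avoids the $X$-domain entirely: Lem.~\ref{lem:adaptive} establishes the Lloyd--Max optimality conditions for one-bit quantization of a random variable $U$ from $U$ itself (the optimal cells are the Voronoi regions of the two centroids, forcing a threshold at the midpoint of the conditional means), and Lem.~\ref{lem:unique} gives uniqueness of the resulting fixed point under log-concavity. The threshold $\tau$ in the theorem is thus the optimal one-bit quantization point of $\theta$ under the prior $\pi$, which is then applied to the noisy observation $X$. If you want to complete your own argument, its correct conclusion is that the one-step optimal message is $\sgn(X-\tau^\star)$ with $\tau^\star$ solving $h(\tau)=\tfrac12\left(\mathbb E[\theta\mid X>\tau]+\mathbb E[\theta\mid X<\tau]\right)$ --- equivalently, apply Lem.~\ref{lem:adaptive} to $U=h(X)$ and use monotonicity of $h$ --- and you should not expect to reconcile that threshold with the prior-only fixed point of \eqref{eq:fixed_point}, because the two genuinely differ.
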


\begin{proof}
The proof is completed by the following two lemmas, proofs of which can be found in the Appendix:
\begin{lem} \label{lem:unique}
Let $f(x)$ be a log-concave probability density function. Then the equation 
\begin{equation}
\label{eq:lem_fixed_point}
2x = \frac{\int_x^\infty uf(u)du}{\int_x^\infty f(u)du} + \frac{\int_{-\infty}^x uf(u)du}{\int_{-\infty}^x f(u)du} 
\end{equation}
has a unique solution.
\end{lem}
\begin{lem} \label{lem:adaptive}
Let $U$ be a random variable with probability density function  $P(du)$. Then the one-bit message $M^\star\in \{-1,1\}$ that minimizes
\[
\int \left( u - \mathbb E[U|M(u)]  \right)^2 P(du)
\]
is given by
\[
M^\star  =  \sgn(U - \tau),
\]
where $\tau$ is the unique solution to
 \[
2 \tau = \frac{\int_{\tau}^\infty u P(du)} {\int_{\tau}^\infty P(du)} + \frac{\int_{-\infty}^{\tau} u P(du)}{\int_{-\infty}^{\tau} P(du)}.
\]
\end{lem}
\end{proof}

Thm.~\ref{thm:opt_one_step} suggests the following adaptive encoding and estimation scheme: 
\begin{itemize}
\item Initialization: set $P_0(t) = \pi(\theta)$.
\item For $n\geq 1$:
\begin{enumerate}
\item Update the prior as
\begin{align}
P_n(t) = & P(\theta=t |M^n) \\
& = \frac{ P\left( \theta=t | M^{n-1} \right) P(M_n | \theta = t , M^{n-1})  } { P(M_n | M^{n-1} )} \nonumber \\ 
& = \alpha_n  P_{n-1}(t) \Phi\left(M_n \frac{ t - \tau_{n-1} }{\sigma} \right), \label{eq:density_update}
\end{align}
where $\alpha_n$ is a normalization coefficient that equals
\[
\alpha_n = \left(\int_{\mathbb R} P_{n-1}(t) \Phi\left(M_n \frac{t- \tau_{n-1} }{\sigma} \right)  dt \right)^{-1}. 
\]
\item The $n$th estimate for $\theta$ is the conditional expectation of $\theta$ given $M^n$, namely
\begin{equation}
\theta_n = \mathbb E \left[ \theta| M^n\right] = \int_{-\infty}^\infty t P_n(t) dt. \label{eq:estimator_update}
\end{equation}
\item Solve equation \eqref{eq:fixed_point} with the updated prior $P_n(t)$ instead of $\pi(d\theta)$. Note that since the standard normal cdf $\Phi(x)$ is log-concave, the updated prior $P_n(t)$ remains log-concave and thus a unique solution to \eqref{eq:fixed_point} is guaranteed by Lem.~\ref{lem:unique}. 
\item Update the $(n+1)$th message as
\begin{equation}\label{eq:message_update}
M_{n+1} = \sgn(X_{n+1}-\tau_n)
\end{equation}
\end{enumerate}
\end{itemize}
Since equation \eqref{eq:fixed_point} has no analytic solution, it is hard to derive the asymptotic behavior of the estimator defined by \eqref{eq:estimator_update} and \eqref{eq:message_update}. We conjecture that it attains the asymptotic relative efficiency of $\sigma^2\pi/2$ as can be observed from the numerical simulation illustrated in Fig.~\ref{fig:adaptive_error}. Also shown in Fig.~\ref{fig:adaptive_error} are the normalized MSE of the asymptotically optimal estimator defined by \eqref{eq:sgd_alg} and \eqref{eq:sgd_est}, as well as the MSE achieved by the empirical mean of the samples for the same sample realization.

\begin{figure}
\begin{center}
\begin{tikzpicture}
\node at (0,0) {\includegraphics[scale=0.4]{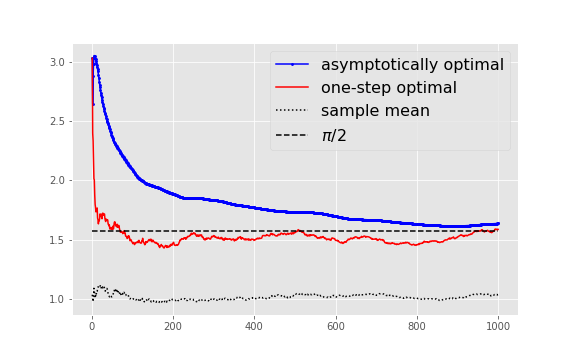}};
\node[rotate = 90, scale = 0.7] at (-3.8,0) {$n \mathbb E \left(\widehat{\theta}_n - \theta \right)^2$};
\node[scale = 0.7] at (0,-2.4) {$n$};
\end{tikzpicture}
\caption{Normalized empirical risk $n\left(\widehat{\theta}_n-\theta\right)^2$ versus number of samples $n$ for $500$ Monte Carlo trials. In each trial, $\theta$ is chosen uniformly in the interval $(-3,3)$. \label{fig:adaptive_error}  }
\end{center}
\end{figure}

\section{Conclusions \label{sec:conclusions}}
We considered the MSE risk and asymptotic relative efficiency in estimating the mean of a normal distribution from a single-bit encoding of each sample from this distribution. In the adaptive scenario where each one-bit message is a function of the previously seen messages and current sample, we showed that the minimal ARE is $\pi/2$. Namely, there is a penalty factor of at least $\pi/2$ on the asymptotic MSE risk in estimating the mean compared to an estimator that has full access to the sample. We also showed that this lower bound is tight by presenting an adaptive estimation procedure that attains it. The lower bound of $\pi/2$ on the ARE also holds in the fully distributed case where each single-bit message is only a function of a single independent sample, although the question whether this ARE is achievable remains still open. 



\appendix

\section{Proofs}
In this appendix we provide detailed proofs of our main results as described in Section~\ref{sec:sequential}.

\subsection*{Proof of Thm.~\ref{thm:adpative_lower_bound}}

We first prove the following two lemmas:
\begin{lem} \label{lem:bound_intervals}
For any $x_1 \geq \ldots \geq x_n \in \mathbb R$, we have 
\begin{equation}
\frac{ \left(  \sum_{k=1}^n (-1)^{k+1}\phi(x_k) \right)^2} 
{\left( \sum_{k=1}^n (-1)^{k+1} \Phi(x_k) \right)\left(1- \sum_{k=1}^n (-1)^{k+1} \Phi(x_k) \right)  } \leq \frac{2} {\pi}. \label{eq:bound_intervals}
\end{equation}
\end{lem}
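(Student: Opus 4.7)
The plan is to recognize the two alternating sums as integrals over a set $A\subset\mathbb R$ determined by the ordered thresholds $x_1\ge\cdots\ge x_n$, then use a Neyman--Pearson rearrangement to reduce to the case where $A$ is a half-line, and finally resolve the resulting one-variable inequality by elementary calculus.

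First, I would define $A = A(x_1,\ldots,x_n)$ by telescoping: each pair $\Phi(x_{2j-1})-\Phi(x_{2j}) = \Pr(x_{2j}<Z\le x_{2j-1})$, and when $n$ is odd the leftover $\Phi(x_n)$ contributes the half-line $(-\infty,x_n]$. With $Z\sim\mathcal N(0,1)$ this yields $p := \sum_k(-1)^{k+1}\Phi(x_k) = \Pr(Z\in A)$. Using $\int_a^b(-z)\phi(z)\,dz=\phi(b)-\phi(a)$, the same telescoping produces $q := \sum_k(-1)^{k+1}\phi(x_k) = -\int_A z\,\phi(z)\,dz$, so the lemma is equivalent to
\[
\left(\int_A z\,\phi(z)\,dz\right)^{2} \le \frac{2}{\pi}\,\Pr(Z\in A)\,\Pr(Z\notin A),
\]
which I would prove for every measurable $A$, not only those arising from ordered thresholds.

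Second, fix $p := \Pr(Z\in A)$. Writing the integral as $\mathbb E[Z\,\mathbf 1_{\{Z\in A\}}]$ and applying the Neyman--Pearson (bathtub) principle, among all $A$ with $\Pr(Z\in A)=p$ the quantity $\mathbb E[Z\,\mathbf 1_{\{Z\in A\}}]$ is maximized by the right half-line $[\Phi^{-1}(1-p),\infty)$ and minimized by the left half-line $(-\infty,\Phi^{-1}(p)]$, both yielding values of equal absolute value $\phi(\Phi^{-1}(p))$, since $\phi$ is even and $\Phi^{-1}(1-p)=-\Phi^{-1}(p)$. Hence $q^2 \le \phi(t)^2$ with $t := \Phi^{-1}(p)$, and it suffices to establish the one-variable inequality
\[
\phi(t)^2 \le \frac{2}{\pi}\,\Phi(t)\bigl(1-\Phi(t)\bigr)\qquad\text{for all } t\in\mathbb R.
\]

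Third, I would prove this inequality by calculus. Put $F(t) := \Phi(t)(1-\Phi(t)) - (\pi/2)\phi(t)^2$; by symmetry $F(-t)=F(t)$, and $F(0)=F(\pm\infty)=0$. A direct computation gives $F'(t) = \phi(t)\,G(t)$ with $G(t) := 1-2\Phi(t)+\pi t\phi(t)$, $G(0)=0$, and $G'(t) = \phi(t)(\pi-2-\pi t^2)$. On $(0,\infty)$ the function $G$ therefore rises from $0$, peaks at $t^\star = \sqrt{1-2/\pi}$, and then decreases monotonically to $G(\infty)=-1$, so $G$ crosses zero exactly once, at some $t_0>t^\star$. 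Consequently $F$ is unimodal on $(0,\infty)$ with boundary values zero, so $F\ge 0$ on $(0,\infty)$ and, by symmetry, on all of $\mathbb R$. The main obstacle is this sign analysis: $F$ is not monotone, and one must combine the boundary conditions $F(0)=F(\pm\infty)=0$ with the unique sign change of $G$ to deduce non-negativity. By comparison step two is a one-line Neyman--Pearson application once the objective has been rewritten, and step one is bookkeeping on the telescoping sums.
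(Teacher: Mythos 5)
Your proof is correct, but it takes a genuinely different route from the paper's. The paper proves the lemma by induction on $n$: the base case is the classical bound $\phi^2(x)\le \tfrac{2}{\pi}\Phi(x)(1-\Phi(x))$ cited from the probit-analysis literature, and the inductive step is a critical-point analysis of a log-transformed objective combined with a boundary analysis as coordinates escape to $\pm\infty$. You instead observe that the two alternating sums are exactly $\Pr(Z\in A)$ and $-\int_A z\,\phi(z)\,dz$ for the union of intervals $A$ cut out by the ordered thresholds, apply the Neyman--Pearson (bathtub) rearrangement to show that for fixed $p=\Pr(Z\in A)$ the numerator is largest in absolute value when $A$ is a half-line (value $\phi(\Phi^{-1}(p))$), and then establish the resulting one-variable inequality $\phi(t)^2\le\tfrac{2}{\pi}\Phi(t)(1-\Phi(t))$ by a clean sign analysis of $F'=\phi\cdot G$ using $G(0)=0$, $G(\infty)=-1$, and the single sign change of $G'$; all of these steps check out. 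Your route buys several things: it avoids induction and the paper's somewhat delicate stationary-point and boundary casework (the paper only treats $x_N\to-\infty$ and $x_1\to+\infty$ explicitly and gestures at the rest), it is self-contained where the paper cites Samford for the base case, and it proves the stronger statement for \emph{arbitrary} measurable $A$ --- which would also let one delete the interval-approximation argument in the paper's proof of Lem.~\ref{lem:fisher_bound}, since the Fisher-information bound then follows directly for any decision region. The paper's induction, by contrast, stays entirely within the combinatorial structure of the ordered thresholds and needs no measure-theoretic rearrangement lemma.
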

\begin{lem} \label{lem:fisher_bound}
Let $X\sim \mathcal N(\theta,\sigma^2)$ and assume that 
\[
M(X) = \begin{cases} 1,& X \in A, \\
-1, & X \notin A.
\end{cases}
\]
Then the Fisher information of $M$ with respect to $\theta$ is bounded from above by $2/(\pi \sigma^2)$.
\end{lem}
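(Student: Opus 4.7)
The plan is to reduce the lemma to a direct application of Lemma~\ref{lem:bound_intervals} by explicitly expressing the Fisher information of $M$. Since $M$ is binary with success probability $p(\theta) := \mathbb{P}(X \in A \mid \theta)$, its Fisher information equals
\[
I_M(\theta) = \frac{[p'(\theta)]^2}{p(\theta)(1-p(\theta))}.
\]
The goal is to rewrite both numerator and denominator so that they match the left-hand side of \eqref{eq:bound_intervals} for some decreasing sequence $x_1 \geq \cdots \geq x_n$.

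I would first handle the case where $A$ is a finite union of disjoint intervals, $A = \bigcup_{j=1}^m (a_{2j-1}, a_{2j}]$, allowing $a_1 = -\infty$ or $a_{2m} = +\infty$. After the substitution $u = (x-\theta)/\sigma$ inside each interval, one finds
\[
p(\theta) = \sum_{k=1}^{2m} (-1)^k \Phi\!\left(\tfrac{a_k-\theta}{\sigma}\right), \qquad p'(\theta) = -\tfrac{1}{\sigma}\sum_{k=1}^{2m} (-1)^k \phi\!\left(\tfrac{a_k-\theta}{\sigma}\right).
\]
Re-indexing $x_k := (a_{2m+1-k} - \theta)/\sigma$ so that $x_1 \geq \cdots \geq x_{2m}$, both sums turn into the alternating sums appearing in Lemma~\ref{lem:bound_intervals}; any overall sign flip (e.g.\ when $A$ contains $\pm\infty$) only exchanges $p$ with $1-p$, which leaves the denominator $p(1-p)$ and the squared numerator $(p')^2$ unchanged. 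Substituting into the Fisher information formula and invoking Lemma~\ref{lem:bound_intervals} yields $I_M(\theta) \leq 2/(\pi\sigma^2)$ immediately.

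I would then extend to a general measurable set $A$ by approximation. Regularity of the Gaussian measure provides finite-interval approximants $A_\epsilon$ with $\mathbb{P}(X \in A \triangle A_\epsilon \mid \theta) \to 0$, and the smooth, bounded dependence of the Gaussian density and its $\theta$-derivative let one pass $p(\theta)$ and $p'(\theta)$ to the limit. Near points where $p(1-p) \to 0$ the bound is preserved trivially, since $p'$ goes to zero at least as fast by Cauchy--Schwarz on the Gaussian score. The main obstacle is really the index bookkeeping in the first step --- keeping signs straight when $A$ is unbounded and matching the sum to the exact form of Lemma~\ref{lem:bound_intervals} --- after which the bound follows mechanically from the inequality established there.
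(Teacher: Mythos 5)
Your proposal is correct and follows essentially the same route as the paper: write the Fisher information of the binary message as $[p'(\theta)]^2/\bigl(p(\theta)(1-p(\theta))\bigr)$, reduce to the case where $A$ is a finite union of intervals by a measure-theoretic approximation, express $p$ and $p'$ as alternating sums of $\Phi$ and $\phi$ at the (sorted) scaled endpoints, and invoke Lem.~\ref{lem:bound_intervals}. The only difference is cosmetic --- you do the interval case first and the approximation second, whereas the paper approximates first --- and your remark that sign flips merely exchange $p$ with $1-p$ is a correct and slightly cleaner way to handle unbounded components of $A$.
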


\subsubsection*{Proof of Lem.~\ref{lem:bound_intervals}}
We use induction on $n \in \mathbb N$. For the base case $n=1$ we have 
\begin{equation} \label{eq:induction_base}
\frac{  \phi^2(x)} 
{\Phi(x) \left(1 - \Phi(x) \right) },
\end{equation}
which is the weight function in the probit analysis and is known to be a strictly decreasing function of $|x|$ \cite{Samford1953}. 
In particular, the maximum of \eqref{eq:induction_base} is obtained at $x=0$ where it equals $2/\pi$. \par
Assume now that \eqref{eq:bound_intervals} holds for all integers up to some $n = N-1$ and consider the case $n = N$. The maximal value of \eqref{eq:bound_intervals} is attained for the same $(x_1,\ldots,x_N) \in \mathbb R^N$ that attains the maximal value of 
\begin{align*}
& g(x_1,\ldots, x_N) \triangleq 2 \log \left(  \sum_{k=1}^{N} (-1)^{k+1} \phi(x_k) \right) - \\
&  \log
\left( \sum_{k=1}^N (-1)^{k+1} \Phi(x_k) \right)
-\log \left(1 -  \sum_{k=1}^N (-1)^{k+1} \Phi(x_k) \right) \\
& = 2 \log \delta_N - \log \Delta_N - \log \left(1 - \Delta_N  \right),
\end{align*}
where we denoted $\delta_N \triangleq \sum_{k=1}^{N} (-1)^{k+1} \phi(x_k)$ and $\Delta_N =  \sum_{k=1}^N (-1)^{k+1} \Phi(x_k)$. The derivative of $g(x_1,\ldots,x_N)$ with respect to $x_k$ is given by
\[
\frac{\partial  g}{\partial x_k} = \frac{2 (-1)^{k+1} \phi'(x_k)}{\delta_N} -\frac{(-1)^{k+1} \phi(x_k)}{\Delta_N } + \frac{(-1)^{k+1} \phi(x_k)}{1-\Delta_N }.
\]
Using the fact that $\phi'(x) = -x \phi(x)$, we conclude that the gradient of $g$ vanishes only if
\[
x_k = \frac{\delta_N}{2} \left( \frac{1}{\Delta_N} - \frac{1}{1-\Delta_N} \right),\quad k=1,\ldots,N.
\]
In particular, the condition above implies $x_1 = \ldots = x_N$. If $N$ is odd then for $x_1=\ldots =x_N$ we have that the LHS of \eqref{eq:bound_intervals} equals
\[
\frac{\phi(x_1)^2}{ \Phi(x_1) (1 - \Phi(x_1))},
\]
which was shown to be not larger than $2/pi$. If $N$ is even, then for any constant $c$ the limit of the LHS of \eqref{eq:bound_intervals} as $(x_1,\ldots,x_N)\rightarrow (c,\ldots,c)$ exists and equals zero. Therefore, the maximum of the LHS of \eqref{eq:bound_intervals} is not attained at the line $x_1=\ldots=x_N)$. We now consider the possibility that the LHS of \eqref{eq:bound_intervals} is maximized at the borders, as one or more of the coordinates of $(x_1,\ldots,x_N)$ approaches plus or minus infinity. For simplicity we only consider the cases where $x_N$ goes to minus infinity or $x_1$ goes to plus infinity (the general case where the first $m$ coordinates goes to infinity or the last $m$ to minus infinity is obtained using similar arguments). Assume first $x_N \rightarrow -\infty$. Then the LHS of \eqref{eq:bound_intervals} equals
\begin{align*}
\frac{ \left(  \sum_{k=1}^{N-1} (-1)^{k+1}\phi(x_k) \right)^2} 
{\left( \sum_{k=1}^{N-1} (-1)^{k+1} \Phi(x_k) \right)\left(1- \sum_{k=1}^{N-1} (-1)^{k+1} \Phi(x_k)  \right) } ,
\end{align*}
which is smaller than $2/\pi$ by the induction hypothesis. Assume now that $x_1 \rightarrow \infty$. Then the LHS of
\eqref{eq:bound_intervals} equals
\begin{align*}
& \frac{ \left(  \sum_{k=2}^{N} (-1)^{k+1}\phi(x_k) \right)^2} 
{\left( 1 + \sum_{k=2}^{N} (-1)^{k+1} \Phi(x_k) \right)\left(1- 1 - \sum_{k=2}^{N} (-1)^{k+1} \Phi(x_k)  \right) }  \\
& = \frac{ \left(  -\sum_{m=1}^{N} (-1)^{m+1}\phi(x'_m) \right)^2} 
{\left( 1 - \sum_{m=1}^{N-1} (-1)^{m+1} \Phi(x'_{m}) \right)\left( \sum_{m=1}^{N-1} (-1)^{m+1} \Phi(x'_{m})  \right) },
\end{align*}
where $x'_{m} = x_{m+1}$. The last expression is also smaller than $2/\pi$ by the induction hypothesis. This proves Lem.~\ref{lem:bound_intervals}. \\

\subsubsection*{Proof of Lem.~\ref{lem:fisher_bound}}
The Fisher information of $M$ with respect to $\theta$ is given by
\begin{align}
I_\theta & =  \mathbb E \left[ \left( \frac{d}{d\theta} \log P\left( M | \theta \right) \right)^2 |\theta \right] \nonumber \\
& = \frac{ \left(\frac{d}{d\theta} P(M=1|\theta) \right)^2}{P(M=1| \theta)} + \frac{ \left(\frac{d}{d\theta} P(M=-1|\theta) \right)^2} {P(M=-1| \theta)} \nonumber \\
& =  \frac{ \left( \frac{d}{d\theta} \int_A \phi \left( \frac{x-\theta}{\sigma} \right)dx \right)^2} { P(M=1| \theta) } + \frac{ \left( \frac{d}{d\theta}\int_A \phi \left( \frac{x-\theta}{\sigma} \right)dx \right)^2} { P(M=-1| \theta) } \nonumber \\ 
& \overset{(a)}{=} \frac{ \left( - \int_A \phi' \left( \frac{x-\theta}{\sigma} \right)dx \right)^2} {\sigma^2 P(M=1| \theta) } + \frac{ \left(- \int_A \phi' \left( \frac{x-\theta}{\sigma} \right)dx \right)^2} { \sigma^2P(M=-1| \theta) } \nonumber \\ 
& = \frac{\left( \int_A \phi'\left( \frac{x-\theta}{\sigma} \right) dx \right)^2 }{  \sigma^2 P(M=1 | \theta) \left(1-P(M=1|\theta) \right)  }, \nonumber \\
& = \frac{\left( \int_A \phi'\left( \frac{x-\theta}{\sigma} \right) dx \right) \left( \int_A \phi'\left( \frac{x-\theta}{\sigma} \right) dx \right)}{  \sigma^2 \left( \int_A \phi \left( \frac{x-\theta}{\sigma} \right) dx \right)  \left(1- \int_A \phi \left( \frac{x-\theta}{\sigma} \right) dx \right) }, \label{eq:lem_fisher_bound_proof1}
\end{align}
where differentiation under the integral sign in $(a)$ is possible since $\phi(x)$ is differentiable with absolutely integrable derivative $\phi'(x) = -x\phi(x)$. Regularity of the Lebesgue measure implies that for any $\epsilon>0$, there exists a finite number $k$ of disjoint open intervals $I_1,\ldots I_k$ such that 
\[
\int_{A\setminus \cup_{j=1}^k I_j }  dx < \epsilon \sigma^2,
\]
which implies that for any $\epsilon' > 0$, the set $A$ in \eqref{eq:lem_fisher_bound_proof1} can be replaced by a finite union of disjoint intervals without increasing $I_\theta$ by more than $\epsilon'$. It is therefore enough to proceed in the proof assuming that $A$ is of the form
\[
A = \cup_{j=1}^k (a_j,b_j),
\]
with $\infty \leq a_1 \leq \ldots a_k$, $b_1 \leq b_k \leq \infty$ and $a_j \leq b_j$ for $j=1,\ldots,k$. Under this assumption we have
\begin{align*}
\mathbb P(M_n=1| \theta) & = \sum_{j=1}^k \mathbb P\left(X_n \in (a_j,b_j) \right)  \\
& = \sum_{j=1}^k \left( \Phi \left(\frac{b_j-\theta}{\sigma} \right) -  \Phi \left(\frac{a_j-\theta}{\sigma} \right)  \right),
\end{align*}
so \eqref{eq:lem_fisher_bound_proof1} can be rewritten as
\begin{align}
& =   \frac { \left( \sum_{j=1}^{k} \phi \left(\frac{a_j-\theta}{\sigma} \right) - \phi \left( \frac{b_j-\theta} {\sigma} \right)  \right)^2 } 
{\sigma^2 \left( \sum_{j=1}^k \Phi \left( \frac{b_j-\theta }{\sigma}\right) - \Phi \left( \frac{a_j-\theta }{\sigma}\right)  \right) }  \nonumber \\
& \times \frac {1} 
{1- \left( \sum_{j=1}^k \Phi \left( \frac{b_j-\theta }{\sigma}\right) - \Phi \left( \frac{a_j-\theta }{\sigma}\right)  \right) } 
\label{eq:lemma_J}
\end{align}
It follows from Lem.~\ref{lem:bound_intervals} that for any $\theta \in \mathbb R$ and any choice of the intervals endpoints, \eqref{eq:lemma_J} is smaller than $2/(\sigma^2 \pi)$. Therefore, the proof of  Lem.~\ref{lem:fisher_bound} is now completed.
 \\

We now consider the proof of Thm.~\ref{thm:adpative_lower_bound}. In order to bound from above the Fisher information of any set of $n$ single-bit messages with respect to $\theta$, we first note that, without loss of generality, each message $M_i$ can be written in the form
\begin{equation}
\label{eq:general_messages}
M_i = \begin{cases}
X_i \in A_i & 1, \\
X_i \notin A_i & -1,
\end{cases} 
\end{equation}
where $A_i \subset \mathbb R$ is a Lebesgue measurable set. Indeed, any measurable function $M(X_i) \in \{-1,1\}$ can be written in the form \eqref{eq:general_messages} with $A_i = M^{-1}(1)$. Consider the conditional distribution $P({M^n|\theta})$ of $M^n$ given $\theta$. We have 
\begin{align}
P\left( M^n | \theta \right) & =  \prod_{i=1}^n P\left(M_i | \theta, M^{i-1} \right), \label{eq:adpt_lower_bound_proof:1}
\end{align}
where $P\left(M_i =1 | \theta, M^{i-1}  \right) = \mathbb P\left( X_i \in A_i\right)$. The Fisher information of $M^n$ with respect to $\theta$ is given by 
\begin{align}
I_\theta(M^n) = -\mathbb E \left[ \frac{d^2}{d\theta^2} \log P(M^n | \theta) \right] = \sum_{i=1}^n I_\theta (M_i|M^{i-1}),
\label{eq:fisher_information}
\end{align}
where 
\[
I_\theta (M_i|M^{i-1}) = \mathbb E \left( \frac{d}{d\theta} \log P(M_i | \theta, M^{i-1}) \right)^2
\]
 is the Fisher information of the distribution of $M_i$ given $M^{i-1}$, where it follows from Lem.~\ref{lem:fisher_bound} that $I_\theta (M_i|M^{i-1}) \leq 2/(\pi \sigma^2)$. We now use the following theorem from \cite[Thm. 2.13]{tsybakov2008introduction} (see also \cite{van2004detection, gill1995applications}):

\begin{thm}[The van Trees inequality \cite{tsybakov2008introduction}] \label{thm:vanTrees}  Denote by $p(\cdot,\theta)$ the density of $P_\theta$ with respect to the Lenesgue measure. Assume that: (i) the density $p(x,\theta)$ is measureable in $(x,\theta)$ and absolutely continuous in $t$ for almost all $x$ with respect to the Lebesgue measure. (ii) The Fisher information
\[
I(\theta) = \int \left( \frac{p'(x,\theta)}{p(x,\theta)} \right)^2 p(x,\theta) dx,
\]
where $p'(x,t)$ denotes the derivative of $p(x,\theta)$ in $t$, is finite and integrable on $\Theta$. (iii) The prior density $\pi(\theta)$ is absolutely continuous on its support $\Theta$ with zero mass at the boundries of $\Theta$, and has a finite Fisher information
\[
I_0 = \int_{\Theta} \frac{\left( \pi'(\theta) \right)^2} {\pi(\theta)} d\theta.
\]
Then, for any estimator $\widehat{t}(\mathbf X)$, the Bayes risk is bounded as follows:
\[
\int_{\theta} \mathbb E \left[ \left( \widehat{t}(\mathbf X) - \theta\right)^2 \right] \pi(d\theta) \geq \frac{1}{ \int I(\theta) \pi(d \theta) + I_0 }.
\]
\end{thm}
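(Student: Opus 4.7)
The plan is to derive this bound as a Bayesian Cramér–Rao inequality obtained by applying the Cauchy–Schwarz inequality to a suitably chosen score function on the joint space $\mathcal{X}\times\Theta$. First I would introduce the joint density $q(x,\theta) = p(x,\theta)\pi(\theta)$ and its associated score in the parameter direction,
\[
\psi(x,\theta) \;=\; \frac{\partial}{\partial \theta}\log q(x,\theta) \;=\; \frac{p'(x,\theta)}{p(x,\theta)} + \frac{\pi'(\theta)}{\pi(\theta)}.
\]
Hypotheses (i) and (iii) guarantee that $\psi$ is defined $q$-almost everywhere and is square integrable under $q$.

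Next I would establish the pivotal identity
\[
\mathbb{E}\!\left[(\widehat{t}(\mathbf{X}) - \theta)\,\psi(\mathbf{X},\theta)\right] = 1,
\]
by writing the expectation as the double integral $\iint (\widehat{t}(x) - \theta)\,q_{\theta}(x,\theta)\,d\theta\,dx$ and integrating by parts in $\theta$. The part involving $\widehat{t}(x)$ (which does not depend on $\theta$) produces only a boundary term, which vanishes because $\pi$ has zero mass at $\partial\Theta$; the part involving $\theta$ yields $-\iint q(x,\theta)\,dx\,d\theta = -1$ after the same boundary argument, so the two contributions combine to $+1$. This identity is the heart of the proof and is precisely where the zero-boundary-mass assumption on $\pi$ enters.

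Cauchy–Schwarz then gives
\[
1 \;=\; \bigl(\mathbb{E}[(\widehat{t}-\theta)\,\psi]\bigr)^2 \;\leq\; \mathbb{E}[(\widehat{t}-\theta)^2]\cdot \mathbb{E}[\psi^2],
\]
so all that remains is to evaluate $\mathbb{E}[\psi^2]$. Expanding the square and integrating the cross term $\frac{p'(\mathbf{X},\theta)}{p(\mathbf{X},\theta)}\cdot\frac{\pi'(\theta)}{\pi(\theta)}$ first in $x$, I use that the data score $\partial_\theta \log p(\mathbf{X},\theta)$ has conditional mean zero given $\theta$, which makes the cross term vanish. The two remaining pieces are exactly $\int I(\theta)\,\pi(d\theta)$ and $I_0$, so $\mathbb{E}[\psi^2] = \int I(\theta)\,\pi(d\theta) + I_0$, and substituting back yields the claimed bound.

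The main obstacle is rigorously justifying both the interchange of differentiation and integration and the vanishing of the boundary terms, particularly when $\Theta$ is unbounded. I would handle this by approximating $\Theta$ by an increasing sequence of compact subintervals $[a_k,b_k]$ on which $\pi(a_k),\pi(b_k)\to 0$, applying classical smooth integration by parts on each, and then passing to the limit by dominated convergence, with domination provided by the integrability of $I(\theta)\pi(\theta)$ and the finiteness of $I_0$; the absolute continuity of $p(x,\cdot)$ in (i) provides the fundamental theorem of calculus needed to make the parameter-wise integration by parts legitimate for almost every $x$.
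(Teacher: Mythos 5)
The paper does not actually prove this statement: it is imported verbatim as \cite[Thm.~2.13]{tsybakov2008introduction} (see also \cite{gill1995applications}) and used only as a black box in the proof of Thm.~\ref{thm:adpative_lower_bound}, so there is no in-paper argument to compare against. Your proposal is the standard proof of the van Trees inequality as given in those references: Cauchy--Schwarz applied to the joint score $\psi=\partial_\theta\log\bigl(p(x,\theta)\pi(\theta)\bigr)$, the pivotal identity $\mathbb E\bigl[(\widehat{t}(\mathbf X)-\theta)\psi\bigr]=1$ obtained by integrating by parts in $\theta$ with the boundary terms killed by the vanishing of $\pi$ at $\partial\Theta$, and the decomposition $\mathbb E[\psi^2]=\int I(\theta)\pi(d\theta)+I_0$ via the vanishing of the cross term (conditional mean zero of the data score given $\theta$). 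The argument is correct, and your closing remarks about exhausting $\Theta$ by compacts and invoking absolute continuity of $p(x,\cdot)$ to justify the integration by parts are exactly the regularity bookkeeping the cited proofs perform; the only cosmetic slip is in the bookkeeping of signs in the pivotal identity (the $-\theta$ term contributes $+\iint q = +1$ and the $\widehat{t}(x)$ term contributes $0$), which does not affect the conclusion.
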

Thm.~\ref{thm:vanTrees} applied to our problem with $p(x,\theta) = P(M^n|\theta)$ implies
\begin{align*}
\mathbb E \left(\widehat{\theta}_n - \theta \right)^2 &  \geq \frac{1}{ \mathbb E I_\theta(M^n) + I_0} \\
& = \frac{1}{ \sum_{i=1}^n I_\theta (M_i | M^{i-1} ) + I_0} \\
& \geq \frac{1}{ 2n/(\pi \sigma^2) + I_0}.
\end{align*}

\QEDA

\subsection*{Proof of Thm.~\ref{thm:sgd}}
The algorithm given in \eqref{eq:sgd_alg} and \eqref{eq:sgd_est} is a special case of a more general class of estimation procedures given in \cite{polyak1992acceleration} and \cite{polyak1990new}. Specifically, (i) in Thm.~\ref{thm:sgd} follows directly from the following simplified version of \cite[Thm. 4]{polyak1992acceleration}:
\begin{thm}{\cite[Thm. 4]{polyak1992acceleration}} \label{thm:polyak_juditsky}
Let 
\[
X_i = \theta + Z_i,\quad i=1,\ldots,n,
\]
where the $Z_i$s are i.i.d. with zero means and finite variances. Define
\begin{align*}
\theta_i & = \theta_{i-1} + \gamma_i \varphi(X_i - \theta_{i-1}), \\
\widehat{\theta}_n & = \frac{1}{n} \sum_{i=0}^{n-1} \theta_i, 
\end{align*}
where in addition, assume the following: 
\begin{enumerate}
\item[(i)] There exits $K_1$ such that $\left| \varphi(x) \right| \leq K_1(1+|x|)$ for all $x\in \in \mathbb R$.
\item[(ii)] The sequence $\left\{ \gamma_i \right\}_{i=1}^\infty$ satisfies conditions \eqref{eq:conditions}.
\item[(iii)] The function $\psi(x) \triangleq \mathbb E \varphi(x+Z_1)$ is differentiable at zero with $\psi'(0)>0$, and satisfies $\psi(0)=0$ and $x\psi(x) >0$ for all $x\neq 0$.
Moreover, assume that there exists $K_2$ and $0<\lambda \leq 1$ such that
\begin{equation}
\label{eq:Polyak_Juditsky_cond3}
\left| \psi(x) - \psi'(0)x \right|\leq K_2 |x|^{1+\lambda}.
\end{equation}
\item[(iv)] The function 
$\chi(x) \triangleq \mathbb E \varphi^2(x+Z_1)$ is continuous at zero. 
\end{enumerate}
Then $\widehat{\theta}_n \rightarrow \theta$ almost surely and $ \sqrt{n}(\widehat{\theta}_n - \theta)$ converges in distribution to $\mathcal N(0,V)$, where
\[
V = \frac{ \chi(0)} {\psi'^2(0)}. 
\]
\end{thm}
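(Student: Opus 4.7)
The plan is to follow the standard averaging argument for stochastic approximation, organized around an Abel-summation identity and a martingale central limit theorem. Write $\Delta_i = \theta_i - \theta$, so the recursion reads $\Delta_i = \Delta_{i-1} + \gamma_i \varphi(Z_i - \Delta_{i-1})$. Conditionally on $\mathcal{F}_{i-1} = \sigma(Z_1,\ldots,Z_{i-1})$, $Z_i$ is independent of $\Delta_{i-1}$, so $\mathbb{E}[\varphi(Z_i - \Delta_{i-1}) \mid \mathcal{F}_{i-1}] = \psi(-\Delta_{i-1})$, and the innovations $\xi_i = \varphi(Z_i - \Delta_{i-1}) - \psi(-\Delta_{i-1})$ form a martingale difference sequence. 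The Taylor bound \eqref{eq:Polyak_Juditsky_cond3} gives $\psi(-\Delta_{i-1}) = -\psi'(0)\Delta_{i-1} + R_i$ with $|R_i| \le K_2 |\Delta_{i-1}|^{1+\lambda}$, so
\begin{equation*}
\Delta_i = (1 - \gamma_i \psi'(0))\Delta_{i-1} + \gamma_i \xi_i + \gamma_i R_i,
\end{equation*}
which rearranges to the key identity $\psi'(0)\Delta_{i-1} = (\Delta_{i-1} - \Delta_i)/\gamma_i + \xi_i + R_i$ that will be summed.

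Before averaging, I would establish two preliminary estimates. First, $\Delta_i \to 0$ almost surely, via a Robbins--Siegmund supermartingale argument: squaring the recursion and conditioning, the sign hypothesis $x\psi(x) > 0$ makes $\Delta_{i-1}\psi(-\Delta_{i-1}) \le 0$, while assumption (i) combined with $\mathrm{Var}(Z_1) < \infty$ bounds $\chi(x) \le C(1+x^2)$, and $\sum_i \gamma_i^2 < \infty$ (implicit in \eqref{eq:conditions}, since $\sum \gamma_i^{(1+\lambda)/2}/\sqrt i < \infty$ forces a rate on $\gamma_i$ that makes $\sum \gamma_i^2$ converge) yields a near-supermartingale for $\Delta_i^2$ whose almost sure limit is identified as zero using the strict sign condition together with $\sum_i \gamma_i = \infty$. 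Second, the $L^2$ rate $\mathbb{E}\Delta_i^2 = O(\gamma_i)$: the local strong-monotonicity bound $\Delta\psi(-\Delta) \le -c\Delta^2$ near zero (a consequence of $\psi'(0) > 0$ and \eqref{eq:Polyak_Juditsky_cond3}) turns the conditional second-moment recursion into a discrete Gronwall comparison with $u_i = (1 - c\gamma_i) u_{i-1} + K'\gamma_i^2$.

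With these in hand, sum the key identity from $i=1$ to $n$ and apply Abel summation to the telescoping piece $\sum_{i=1}^n (\Delta_{i-1}-\Delta_i)/\gamma_i$ to obtain
\begin{equation*}
\sqrt{n}(\widehat{\theta}_n - \theta) = \frac{1}{\psi'(0)\sqrt{n}} \sum_{i=1}^n \xi_i + E_n,
\end{equation*}
where $E_n$ collects the boundary terms, the Abel correction $\sum_{i=1}^{n-1} \Delta_i(1/\gamma_{i+1} - 1/\gamma_i)$, and the remainder sum $\sum R_i$, each divided by $\psi'(0)\sqrt{n}$. The boundary contributions are $O_p((\gamma_n\sqrt{n})^{-1}) \to 0$ since $n\gamma_n \to \infty$; the Abel correction is controlled by pairing $|1/\gamma_{i+1} - 1/\gamma_i| = o(1)$ (from the first line of \eqref{eq:conditions}) with $\mathbb{E}\Delta_i^2 = O(\gamma_i)$; and the remainder sum has $L^1$ mass $K_2 \sum \mathbb{E}|\Delta_i|^{1+\lambda} = O(\sum \gamma_i^{(1+\lambda)/2})$, so dividing by $\sqrt n$ and invoking Kronecker's lemma with $\sum \gamma_n^{(1+\lambda)/2}/\sqrt n < \infty$ makes it vanish. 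The leading martingale term is then handled by the Hall--Heyde martingale CLT: conditional variances $\mathbb{E}[\xi_i^2 \mid \mathcal{F}_{i-1}] = \chi(-\Delta_{i-1}) - \psi(-\Delta_{i-1})^2$ tend to $\chi(0)$ by a.s.\ convergence of $\Delta_{i-1}$ and continuity of $\chi$ at zero (assumption (iv)), and a Lindeberg condition follows from the linear growth of $\varphi$; thus $\frac{1}{\sqrt n}\sum_{i=1}^n \xi_i \Rightarrow \mathcal{N}(0,\chi(0))$, and dividing by $\psi'(0)$ gives $V = \chi(0)/\psi'(0)^2$.

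The main obstacle is the $L^2$ rate $\mathbb{E}\Delta_i^2 = O(\gamma_i)$: the local strong-monotonicity contraction only operates inside a fixed neighborhood of zero, so the argument has to be bootstrapped by first obtaining a uniform $L^2$ bound from the near-supermartingale property, then combining the a.s.\ convergence with the local contraction to upgrade to the sharper $O(\gamma_i)$ rate. Once this is in place, the remaining estimates on $E_n$ are routine bookkeeping against the two step-size conditions in \eqref{eq:conditions}.
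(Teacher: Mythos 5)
The paper does not prove this statement at all: it is quoted, in simplified form, from Polyak and Juditsky and used as a black box in the proof of Thm.~\ref{thm:sgd}, so there is no in-paper argument to compare against. Your sketch reconstructs the canonical averaging proof --- the error recursion $\Delta_i=(1-\gamma_i\psi'(0))\Delta_{i-1}+\gamma_i\xi_i+\gamma_iR_i$, Robbins--Siegmund for almost sure convergence, the $L^2$ rate $\mathbb E\Delta_i^2=O(\gamma_i)$, Abel summation, and a martingale CLT for the leading term --- and this architecture is correct and is essentially the argument of the cited reference, including the honest acknowledgment that the $O(\gamma_i)$ rate must be bootstrapped from a local contraction.

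Two of your quantitative estimates are wrong or too loose as written, though both are repairable with material you already have. First, the boundary term: you claim it is $O_p((\gamma_n\sqrt n)^{-1})\to 0$ ``since $n\gamma_n\to\infty$,'' but $(\gamma_n\sqrt n)^{-1}\to 0$ requires $n\gamma_n^2\to\infty$, which fails precisely for the relevant step sizes $\gamma_n=n^{-\beta}$ with $\beta>1/2$ (and $\beta>1/(1+\lambda)\geq 1/2$ is forced by the summability condition in \eqref{eq:conditions}). The fix is to use $\mathbb E\Delta_n^2=O(\gamma_n)$, giving $\Delta_n/(\gamma_n\sqrt n)=O_p((n\gamma_n)^{-1/2})\to 0$. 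Second, the Abel correction: pairing $|1/\gamma_{i+1}-1/\gamma_i|=o(1)$ with $\mathbb E|\Delta_i|=O(\sqrt{\gamma_i})$ only yields $o\bigl(n^{-1/2}\sum_{i\leq n}\sqrt{\gamma_i}\bigr)$, which diverges for $\gamma_i=i^{-\beta}$, $\beta<1$; you need the telescoping structure $\sum_{i<n}(1/\gamma_{i+1}-1/\gamma_i)\sqrt{\gamma_i}=O(\gamma_n^{-1/2})$, which after dividing by $\sqrt n$ is again $O((n\gamma_n)^{-1/2})\to 0$. Finally, the claims that $\sum_i\gamma_i^2<\infty$ and $\sum_i\gamma_i=\infty$ follow from \eqref{eq:conditions} are asserted rather than shown; they hold for the power-law examples but deserve a line of justification. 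With these repairs the sketch is a complete proof.
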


Using the notation above, we set $\varphi(x) = \sgn(x)$ and $Z_i = X_i - \theta$. We have that $\chi(x) = \mathbb E \sgn^2(x+Z_1) = 1$, so $\chi(0) = 1$. In addition,
\begin{align*}
\psi(x) & = \mathbb E \sgn(x+ Z_1) = \int_{-\infty}^\infty \sgn(x+z) \frac{1}{\sqrt{2\pi}\sigma} e^{-\frac{z^2}{2\sigma^2}} dz \\
& = \int_{-x}^\infty \frac{1}{\sqrt{2\pi}\sigma} e^{-\frac{z^2}{2\sigma^2}} dz -\int_{-\infty}^{-x} \frac{1}{\sqrt{2\pi}\sigma} e^{-\frac{z^2}{2\sigma^2}} dz.
\end{align*}
This leads to 
\begin{align*}
\psi'(x) & = \frac{1}{\sqrt{2\pi}\sigma} e^{-\frac{x^2}{2\sigma^2}} dz +\frac{1}{\sqrt{2\pi}\sigma} e^{-\frac{z^2}{2\sigma^2}} dz,
\end{align*}
so $\psi'(0) = \frac{2}{\sqrt{2\pi}\sigma}$. It is now easy to verify that the rest of the conditions in Thm.~\ref{thm:polyak_juditsky} are fulfilled for any $\lambda > 0$. Since 
\[
\frac{\chi(0)}{\psi'^2(0)} = \frac{\pi \sigma^2}{2},
\]
Thm.~\ref{thm:sgd}-(i) follows from Thm.~\ref{thm:polyak_juditsky}.  \\
In order to prove Thm.~\ref{thm:sgd}-(ii), we consider:
\begin{thm}{ \cite[Thm. 2]{polyak1990new}} \label{thm:polyak_new}
Let
\begin{align} \label{eq:polyak_new_measurements}
\begin{cases}
U_n = U_{n-1} - \gamma_n \varphi(Y_n), & Y_n = f'(U_{n-1})+Z_n \\
\bar{U}_n= \frac{1}{n} \sum_{i=1}^n U_n, & n=1,2,\ldots.
\end{cases}
\end{align}
Assume that the function $f(x)$ is twice differentiable with a strictly positive and uniformly bounded second derivative. In particular, $f(x)$ is convex with a unique minimizer $x^\star \in \mathbb R$. Moreover, assume that the noises $Z_n$ are uncorrelated and identically distributed with a distribution for which the Fisher information exits. Let $\psi(x)$ and $\chi(x)$ be defined as in  Thm.~\ref{thm:polyak_juditsky}-(iii) and satisfies the conditions there. Assume in addition that $\chi(0)>0$, condition \eqref{eq:Polyak_Juditsky_cond3} with $\lambda = 1$, 
and there exits $K_3$ such that 
\[
\mathbb E \left[ | \varphi(x+Z_1) |^4 \right] \leq K_3(1+|x|^4). 
\]
Finally, assume that the sequence $\{\gamma_n \}$ satisfies conditions \eqref{eq:conditions} and the additional conditions in Thm.~\ref{thm:sgd}-(ii). Then
\[
V_n \triangleq \mathbb E \left[ \left(\bar{U}_n-x^\star \right)^2 \right] = n^{-1}\frac{\chi(0)} { (\psi'(0))^2 (f''(x^\star))^2 } + o(n^{-1}).
\]
\end{thm}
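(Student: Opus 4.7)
The plan is to prove this averaging result by following the Polyak--Juditsky strategy of linearization plus Abel summation, and then upgrading distributional convergence to an $L^2$ statement using the additional fourth-moment hypothesis. Throughout, write $u_n \triangleq U_n - x^\star$, let $A \triangleq \psi'(0)\, f''(x^\star) > 0$, and decompose the update as
\[
u_n = u_{n-1} - \gamma_n \bigl[\psi(f'(U_{n-1})) + \xi_n\bigr],
\]
where $\xi_n \triangleq \varphi(Y_n) - \psi(f'(U_{n-1}))$ is a martingale difference with respect to $\mathcal{F}_{n-1} = \sigma(Z_1,\ldots,Z_{n-1})$, and continuity of $\chi$ at zero ensures $\mathbb{E}[\xi_n^2 \mid \mathcal{F}_{n-1}] \to \chi(0)$ as $u_{n-1} \to 0$. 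The hypothesis $|\psi(x) - \psi'(0) x| \leq K_2\, x^2$ (condition \eqref{eq:Polyak_Juditsky_cond3} with $\lambda = 1$), combined with $f'(U_{n-1}) = f''(x^\star)\, u_{n-1} + O(u_{n-1}^2)$ from the smoothness of $f$, yields
\[
\psi(f'(U_{n-1})) = A\, u_{n-1} + r_n, \qquad |r_n| \leq C\, u_{n-1}^2,
\]
recasting the iteration as the perturbed linear stochastic recursion $u_n = (1 - \gamma_n A)\, u_{n-1} - \gamma_n\, r_n - \gamma_n\, \xi_n$.

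First I would establish the almost sure convergence $u_n \to 0$ and the non-averaged rate $\mathbb{E}[u_n^2] = O(\gamma_n)$ through a Robbins--Siegmund / Lyapunov argument on $u_n^2$, leveraging strong convexity of $f$, the linear growth $|\varphi(x)| \leq K_1(1+|x|)$, and the sign condition $x\, \psi(x) > 0$. Under the additional hypothesis $\mathbb{E}[|\varphi(x+Z_1)|^4] \leq K_3 (1 + x^4)$ and the strengthened step-size condition $\gamma_n = o(n^{-2/3})$, a parallel fourth-moment induction delivers $\mathbb{E}[u_n^4] = O(\gamma_n^2)$. This $L^4$ control is precisely the input that will later permit an $L^2$ rather than merely distributional conclusion.

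The core step is the averaging identity. Solving the linearized recursion for $A\, u_{n-1}$ and summing from $i=1$ to $n$ gives
\[
A \sum_{i=1}^n u_{i-1} \;=\; \sum_{i=1}^n \frac{u_{i-1}-u_i}{\gamma_i} \;-\; \sum_{i=1}^n \xi_i \;-\; \sum_{i=1}^n r_i.
\]
Abel summation on the first sum uses the slowly-varying step-size condition $(\gamma_n - \gamma_{n+1})/\gamma_n = o(\gamma_n)$ to reduce it to a boundary contribution $u_0/\gamma_1 - u_n/\gamma_n$ plus a sum that is $o\bigl(\sum u_i\bigr)$, which is then absorbed into the left-hand side. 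Dividing by $n$ and multiplying by $\sqrt{n}$ yields
\[
\sqrt{n}\, A\, \bar{u}_n \;=\; -\frac{1}{\sqrt{n}}\sum_{i=1}^n \xi_i \;+\; R_n,
\]
where $R_n$ collects the boundary term and the $r_i$ contributions. The boundary behaves like $\sqrt{n}\cdot O_P\bigl(u_n/(n\gamma_n)\bigr) = O_P\bigl(1/\sqrt{n\gamma_n}\bigr) = o_P(1)$ since $n\gamma_n \to \infty$; and $n^{-1/2}\sum_i |r_i| \leq C n^{-1/2}\sum_i \mathbb{E}[u_{i-1}^2] = O\bigl(\sum_i \gamma_i / \sqrt{n}\bigr) = o(1)$ by the summability hypothesis $\sum_n \gamma_n^{(1+\lambda)/2}/\sqrt{n} < \infty$ taken with $\lambda = 1$.

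The main obstacle, and the step where this theorem moves beyond the purely distributional Polyak--Juditsky statement, is the upgrade to the $L^2$ identity $n\, V_n \to \chi(0)/A^2$. Rather than passing to the distributional limit, I would evaluate $\mathbb{E}[n\, \bar{u}_n^2]$ term by term in the decomposition above. Orthogonality of the martingale differences gives $\mathbb{E}\bigl[\bigl(\sum_i \xi_i\bigr)^2\bigr] = \sum_i \mathbb{E}[\xi_i^2]$; continuity of $\chi$ at zero together with the $L^2$ convergence $u_i \to 0$ yields $\mathbb{E}[\xi_i^2] \to \chi(0)$, and Ces\`aro therefore gives $n^{-1}\mathbb{E}\bigl[\bigl(\sum_i \xi_i\bigr)^2\bigr] \to \chi(0)$. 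The residual $R_n$ is shown to vanish in $L^2$ (not merely in probability) using the $L^4$ bound $\mathbb{E}[u_n^4] = O(\gamma_n^2)$ together with Cauchy--Schwarz; this is precisely where the fourth-moment hypothesis on $\varphi$ and the condition $\gamma_n = o(n^{-2/3})$ are indispensable. Combining these estimates yields $\mathbb{E}[n\, \bar{u}_n^2] = \chi(0)/A^2 + o(1)$, which is the asserted asymptotic $V_n = n^{-1}\chi(0)/[(\psi'(0))^2 (f''(x^\star))^2] + o(n^{-1})$.
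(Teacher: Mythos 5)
The paper does not actually prove this statement: it is quoted, in a simplified special case, from \cite[Thm.~2]{polyak1990new} and used as a black box inside the proof of Thm.~\ref{thm:sgd}(ii), so there is no in-paper argument to compare yours against. What you have written is a reconstruction of the averaging proof from the cited source, and the skeleton is the right one: the martingale-difference decomposition $\varphi(Y_n)=\psi(f'(U_{n-1}))+\xi_n$, the linearization $\psi(f'(U_{n-1}))=A\,u_{n-1}+r_n$ with $A=\psi'(0)f''(x^\star)$ and $|r_n|\leq C u_{n-1}^2$ (which correctly uses \eqref{eq:Polyak_Juditsky_cond3} with $\lambda=1$ together with the uniform bound on $f''$), the Abel-summation identity for $A\sum_i u_{i-1}$, and the observation that the fourth-moment hypothesis on $\varphi$ plus $\gamma_n=o(n^{-2/3})$ is precisely what upgrades the distributional statement of Thm.~\ref{thm:polyak_juditsky} to the $L^2$ asymptotics claimed here.

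Two caveats on your sketch. First, the load-bearing moment estimates $\mathbb{E}[u_n^2]=O(\gamma_n)$ and $\mathbb{E}[u_n^4]=O(\gamma_n^2)$ are asserted rather than proved; they are standard Lyapunov/Robbins--Siegmund facts, but they are where most of the technical work in \cite{polyak1990new} lives, and everything downstream (the vanishing of $R_n$ in $L^2$, the uniform integrability needed for $\mathbb{E}[\xi_i^2]\to\chi(0)$) is conditional on them. Second, your handling of the Abel-summation remainder is too loose as stated: $\sum_i u_i\left(1/\gamma_{i+1}-1/\gamma_i\right)$ is not literally $o\left(\sum_i u_i\right)$, since the $u_i$ change sign; the correct route is to bound it in $L^1$ by $\sum_i \left(1/\gamma_{i+1}-1/\gamma_i\right)\mathbb{E}|u_i|$ and combine $\mathbb{E}|u_i|=O(\sqrt{\gamma_i})$ with the quantitative form $\gamma_i-\gamma_{i+1}=o(\gamma_i^2)$ of the first condition in \eqref{eq:conditions}, which does make the term $o(\sqrt{n})$ for $\gamma_n=n^{-\beta}$, $\beta<1$, but not for the reason you give. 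Both issues are repairable and the overall route is the correct one; since the paper simply cites this result, supplying such a proof would be an addition rather than a replacement.
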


We now use Thm.~\ref{thm:polyak_new} with $f(x) = 0.5(x-\theta)^2$, $\varphi(x) = -\sgn(-x)$, $Z_n = \theta-X_n$ and $U_n = \theta_n$. From \eqref{eq:polyak_new_measurements} we have
\begin{align*} 
\theta_n & = \theta_{n-1} + \gamma_n \sgn(\theta-\theta_{n-1} - Z_n )  \\
& = \theta_{n-1} + \gamma_n \sgn(X_n-\theta_{n-1} ),
\end{align*}
so the estimator $\widehat{\theta}_n$ defined by $\widehat{\theta}_n$ equals to the one defined by \eqref{eq:sgd_est} and \eqref{eq:sgd_alg}. Note that
\[
\mathbb E \left[ | \varphi(x+Z_1) |^4 \right] = 1 \leq K_3(1+|x|^4)
\]
for any $K_3\geq 1$, the Fisher information of $Z_1$ is $\sigma^2$, $\chi(x) = 1 > 0$, and that 
the conditions in Thm.~\ref{thm:polyak_new} on $\psi(x)$ and $\chi(x)$ were verified to hold in the first part of the proof. In particular, $\psi'(0) = \sqrt{2}/ (\sqrt{\pi}/\sigma)$. Since $f(x)$ satisfies the conditions above with $x^\star = \theta$ and $f''(x) = 1$. Thm.~\ref{thm:polyak_new} implies 
\[
n V_n = \mathbb E \left[ \left(\widehat{\theta}_n-\theta \right)^2 \right]  = \frac{\pi}{2}  + o(1).
\]

 \QEDA

\subsection*{Proof of Thm.~\ref{thm:opt_one_step}}
In this subsection we prove Lem. \ref{lem:adaptive} and \ref{lem:unique} that lead to Thm.~\ref{thm:opt_one_step}. 

\subsubsection*{Proof of Lem. \ref{lem:adaptive}}
Since any single-bit message $M(u) \in \{-1,1\}$ is characterized by two decision region $A_1 = M^{-1}(1)$ and $A_{-1} = M^{-1}(-1)$, it follows that $\mathbb E \left[ U | M(U) \right]$ assumes only two values: $\mu_1 = \mathbb E \left[ U | M(U) = 1 \right]$ and $\mu_{-1} = \mathbb E \left[ U | M(U) = -1 \right]$. We claim that a necessary condition for $M(u)$ to be optimal is that the sets $A_1$ and $A_{-1}$ are, modulo a set of measure $P(du)$ zero, the Voronoi sets on $\mathbb R$ corresponding to the points $\mu_1$ and $\mu_{-1}$, respectively. Indeed, assume by contradiction that for such an optimal partition there exists a set $B \subset A_{1}$ with $\mathbb P (U \in B) >0$ such that $\left( b-\mu_{1} \right)^2 > \left( b- \mu_{-1} \right)^2$. The expected square error in this partition satisfies:
\begin{align*}
& \int_{\mathbb R} \left( u - \mathbb E[U|M(u)]  \right)^2 P(du) \\
& = \int_{A_1} (u- \mu_1)^2 P(du) + \int_{A_{-1}} (u- \mu_{-1})^2 P(du) \\
& = \int_{A_1\setminus B} (u- \mu_1)^2 P(du) +  \int_{B} (u- \mu_1)^2 P(du) \\
&\quad \quad + \int_{A_{-1}} (u- \mu_{-1})^2 P(du) \\
& > \int_{A_1\setminus B} (u- \mu_1)^2 P(du) +  \int_{B} (u- \mu_2)^2 P(du) \\
& + \quad \quad  \int_{A_{-1}} (u- \mu_{-1})^2 P(du),
\end{align*}
so clearly, the partition $A_1' = A_1 \setminus B$, $A_{-1}' = A_{-1} \cup B$ attains lower error variance, what contradicts the optimality assumption and proves our claim. It is evident that Voronoi partition of the real line corresponding to $\mu_1$ and $\mu_{-1}$ is of the form $A_{-1} = (-\infty,\tau)$, $A_{1} = (\tau, \infty)$ where the point $\tau$ is of equal distance from $\mu_1$ and $\mu_{-1}$, namely $\tau = \frac{\mu_1 + \mu_{-1}}{2}$. From these two conditions (which are a special case of the conditions derived in \cite{1056489} for two quantization regions) we conclude that $\tau$ must satisfy the equation
\[
2 \tau = \frac{\int_{\tau}^\infty u P(du)}{\int_{\tau}^\infty P(du)} + \frac{\int_{-\infty}^{\tau} u P(du)}{\int_{-\infty}^{\tau} P(du)}.
\] 
\QEDA

\subsubsection*{Proof of Lem. \ref{lem:unique}} 
Any solution to \eqref{eq:lem_fixed_point} is a solution to $h^+(x) = h^-(x)$ where
\[
h^+(x) = \frac{\int_x^\infty uf(u)du}{\int_x^\infty f(u)du} - x 
\]
and
\[
h^-(x) = x - \frac{\int_{-\infty}^x uf(u)du}{\int_{-\infty}^x f(u)du}.
\]
We now prove that $h^+(x)$ is monotonically decreasing while $h^-(x)$ is increasing, so they meet at most at one point. The derivative of $h^-(x)$ is given by
\begin{equation} 
\label{eq:one_step_proof_derivative}
1 - \frac{ f(\tau) \int_{-\infty}^\tau f(x) (\tau-x)dx } {\left( \int_{-\infty}^\tau f(x) dx \right)^2}.
\end{equation}
Denote $F(x) = \int_{-\infty}^x f(u)du$. Using integration by parts in the numerator and from the fact that $\lim_{\tau \rightarrow -\infty}  \tau \int_{-\infty}^\tau f(x) dx = 0$, the last expression can be written as
\[
1- \frac{ f(\tau) \int_{-\infty}^\tau F(x) dx}
 {\left( F(\tau) \right)^2}.
\]
Log-concavity of $f(x)$ implies log-concavity of $F(x)$, so that we can write $F(x) = e^{g(x)}$ for some concave and differentiable function $g(x)$. Moreover, we have $f(x) = g'(x)e^{g(x)}$ where, by concavity of $g(x)$, the derivative $g'(x)$ of $g(x)$ is non-increasing. With these notation we have
\begin{align*}
\frac{ f(\tau) \int_{-\infty}^\tau F(x) dx}
 {\left( F(\tau) \right)^2} & = \frac{g'(\tau)e^{g(\tau)} \int_{-\infty}^\tau e^{g(x)}dx }{ e^{2g(\tau)} } \\
 & = e^{-g(\tau)} \int_{-\infty}^\tau g'(\tau) e^{g(x)} dx  \\
 & \leq e^{-g(\tau)} \int_{-\infty}^\tau g'(x) e^{g(x)} dx \\
 & = e^{-g(\tau)} F(\tau) = 1.
\end{align*}
(where the second from the last step follows since $g'(x) \leq g'(\tau)$ for any $x\leq \tau$). If follows that \eqref{eq:one_step_proof_derivative} is non-negative and thus $h^-(x)$ is monotonically increasing. Since
\[
h^+(-x) = x - \frac{ \int_{-\infty}^{x} uf(-u)du}{ \int_{-\infty}^x f(-u) du }, 
\]
the fact that $h^+(x)$ is monotonically decreasing follows from similar arguments. Moreover, since the derivatives of $h^+(x)$ and $h^-(x)$ never vanish at the same time over any open interval, their difference cannot be constant over any interval. Finally, since 
\begin{align*}
 \lim_{x\rightarrow -\infty} h^+(x) =  \lim_{x\rightarrow \infty} h^-(x)
\end{align*}
and since non of these functions are constant, monotonicity of $h^+(x)$ and $h^-(x)$ implies that they must meet at a single point in $\mathbb R$. \QEDA


\bibliographystyle{IEEEtran}
\bibliography{IEEEabrv,/Users/Alon1/LaTex/bibtex/sampling}

\end{document}